\newcommand{\R}{\mathbb{R}}
\newcommand{\N}{\mathbb{N}}
\newcommand{\SetU}{\mathcal{U}}
\newcommand{\SetX}{\mathcal{X}}
\newcommand{\PS}{\mathcal{P}}
\newcommand{\PF}{\mathcal{P}_{{F}}}
\newtheorem{theorem}{Theorem}[section]
\newtheorem{corollary}[theorem]{Corollary}
\newtheorem{proposition}[theorem]{Proposition}
\newtheorem{definition}[theorem]{Definition}
\newtheorem{remark}[theorem]{Remark}
\newtheorem{example}[theorem]{Example}
\let\OLDthebibliography\thebibliography
\renewcommand\thebibliography[1]{
	\OLDthebibliography{#1}
	\setlength{\parskip}{0pt}
	\setlength{\itemsep}{0pt plus 0.3ex}
}
\begin{document}

\title{Explicit multiobjective model predictive control for nonlinear systems with symmetries}
\author[1]{Sina Ober-Bl{\"o}baum}
\author[2]{Sebastian Peitz}
\affil[1]{\normalsize Department of Engineering Science, University of Oxford, UK}
\affil[2]{\normalsize Department of Mathematics, Paderborn University, Germany}

\date{}

\maketitle

\begin{abstract}
	Model predictive control is a prominent approach to construct a feedback control loop for dynamical systems. Due to real-time constraints, the major challenge in MPC is to solve model-based optimal control problems in a very short amount of time. For linear-quadratic problems, Bemporad et al.~have proposed an explicit formulation where the underlying optimization problems are solved a priori in an offline phase. In this article, we present an extension of this concept in two significant ways. We consider nonlinear problems and -- more importantly -- problems with multiple conflicting objective functions. In the offline phase, we build a library of Pareto optimal solutions from which we then obtain a valid compromise solution in the online phase according to a decision maker's preference. Since the standard multi-parametric programming approach is no longer valid in this situation, we instead use interpolation between different entries of the library. To reduce the number of problems that have to be solved in the offline phase, we exploit symmetries in the dynamical system and the corresponding multiobjective optimal control problem. The results are verified using two different examples from autonomous driving.
\end{abstract}

\section{Introduction}
\label{sec:Introduction}
In many applications from industry and economy, several criteria are of equal interest. Popular examples include production processes, where we want to maximize the quality while minimizing the production cost, or transportation, where the objectives are fast and energy efficient driving. Since these objectives are in general contradictory, the solution consists of the set of \emph{optimal compromises} -- the so-called \emph{Pareto set} -- instead of a single optimum, and a compromise can be selected interactively according to a \emph{decision maker's} preference. This way, the flexibility in operating a complex system can be significantly increased. 

Regardless of the solution method (cf.~\cite{Ehr05} for an introduction to deterministic and \cite{CLV07} for evolutionary methods), the computation of the entire Pareto set is infeasible in the real-time context, i.e., in a \emph{model predictive control (MPC)} framework \cite{GP17}. There exist several approaches to circumvent this dilemma, see \cite{PD18} for a survey. One is a priori scalarization, where the conflicting objectives are synthesized into a scalar objective using, e.g., weighted sums \cite{BP09a} or reference point techniques \cite{ZFT12}. A different approach is to stop the expensive computation prematurely and use non-converged solutions \cite{LBK08,GGG+12}. Another frequently applied way to incorporate multiple objectives is to use a classical feedback controller and optimize the controller parameters with respect to several criteria \cite{KWTD11,HNS+13}. An additional criterion for selecting the objectives such that stability is preserved was studied in \cite{GS17}. Finally, an approach has recently been presented in \cite{PSOB+17} in the context of autonomous driving which is motivated by \emph{explicit MPC} \cite{BMDP02}. Here, the solution is determined in an offline phase such that the online phase is reduced to selecting the optimal control from a library. Since the problem under consideration is nonlinear, one cannot use the multi-parametric programming approach proposed in \cite{BMDP02}. Instead, a numerical grid is introduced for the relevant parameters and a \emph{multiobjective optimal control problem (MOCP)} is solved for each grid point. In order to reduce the number of parameters, symmetries in the problem can be exploited using ideas from motion planning with motion primitives \cite{Frazzoli2001,FrDaFe05,Kob08,FOK10}. In the online phase, linear interpolation between the neighboring entries in the library is performed to quickly obtain the relevant Pareto set. According to the decision maker's preference, a Pareto optimal control is then applied to the plant.

In this article, we present a detailed analysis of the algorithm developed in \cite{PSOB+17} and study under which conditions the MOCP possesses symmetries that can be exploited to reduce the number of relevant parameters. We will see that these conditions are reduced to already known ones in the case of linear-quadratic, single-objective problems \cite{DB12}. Both analytic as well as numerical approaches to identify symmetries are discussed. We then present the explicit multiobjective MPC algorithm and give a detailed description of the online and the offline phase, including an automated procedure for solving a large number of MOCPs in parallel. Two complex examples are used to study the above-mentioned properties and to analyze the performance of the proposed method. In the first one, a race track is considered where the goals are to drive as fast and safe as possible. In the second example, we want to control the longitudinal dynamics of an electric vehicle with respect to fast and energy efficient driving (see also \cite{PSOB+17}).

The remainder of this article is structured as follows. In Section~\ref{sec:MOMPC}, we introduce the multiobjective optimal control problem, the basic definitions for multiobjective optimization and the MPC framework we are using. The exploitation of symmetries is discussed in Section~\ref{sec:Symmetries} and the nonlinear explicit MPC algorithm is presented in Section~\ref{sec:Algorithm}. The two examples from autonomous driving are covered in detail in Section~\ref{sec:Examples} and finally, a conclusion is drawn in Section~\ref{sec:Conclusion}.

\section{Multiobjective optimization and model predictive control}
\label{sec:MOMPC}
In this section, we introduce the general nonlinear multiobjective optimal control problem (MOCP) and introduce the basic notions of multiobjective optimization. We then give a very short introduction to MPC.

\subsection{Multiobjective optimization and optimal control}
\label{subsec:MOMPC_Multiobjective}
The goal in multiobjective optimal control is the minimization of multiple conflicting objectives while taking the system dynamics (here described by an ordinary differential equation) into account:
\begin{subequations}\label{eq:MOCPx}
\begin{align}
    \min_{x \in \SetX, u \in \SetU} J(x,u)
    &= \left( \begin{array}{c}
    \int_{t_0}^{t_e} C_1(x(t),u(t))~dt + \Phi_1(x(t_e)) \\ \vdots \\ \int_{t_0}^{t_e} C_k(x(t),u(t))~dt + \Phi_k(x(t_e))
    \end{array}\right) \label{eq:MOCPxJ}
    \\
    \mbox{s.t.} \qquad \qquad \dot{x}(t) &= f(x(t),u(t)),\qquad \qquad t \in (t_0,t_e],  \label{eq:MOCPxd}\\
    x(t_0) &= x_0, \label{eq:MOCPx0} \\
    g_i(x(t),u(t)) &\leq 0, \quad i=1,\ldots,l, \qquad ~ ~ t \in (t_0,t_e],  \label{eq:MOCPg}\\
    h_j(x(t),u(t)) &= 0, \quad j=1,\ldots,m, \qquad t \in (t_0,t_e], \label{eq:MOCPh}
\end{align}
\end{subequations}
where $x(t)\in\R^{n_x}$ is the system state and $u(t)\in U \subset \R^{n_u}$ is the control variable with $U$ being closed and convex.
Since we consider optimization problems over state and control trajectories $x(t)$ and $u(t)$, the sets $\SetX = W^{1,\infty}([t_0,t_e], \R^{n_x})$ and $\SetU = L^{\infty}([t_0,t_e], U)$ are function spaces.
$J: \SetX \times \SetU \rightarrow \R^k$ is the cost functional with $k$ conflicting objectives and with continuously differentiable functions $C_i: \R^{n_x} \times U \rightarrow \R$, $\Phi_i: \R^{n_x} \rightarrow \R$. 
Furthermore, $f: \R^{n_x} \times U \rightarrow \R^{n_x}$ is Lipschitz continuous, and $g: \R^{n_x} \times U \rightarrow \R^l$, $g = (g_1,\ldots,g_l)^{\top}$ and $h: \R^{n_x} \times U  \rightarrow \R^m$, $h = (h_1,\ldots,h_m)^{\top}$, are continuously differentiable inequality and equality constraint functions, respectively. $(x,u)$ is called \emph{feasible pair} if it satisfies the constraints \eqref{eq:MOCPxd}--\eqref{eq:MOCPh}. The space of the control trajectories $\SetU$ is also called the \emph{decision space} and the image of all feasible pairs forms the \emph{objective space}. 

Problem \eqref{eq:MOCPx} can be simplified by introducing the flow of the dynamical system:
\[
    \varphi_u(x_0,t) = x_0 + \int_{t_0}^t f(x(t),u(t))\, dt.
\]
This way, the explicit dependency of $J$, $g$ and $h$ on $x$ can be removed:
\begin{equation}\label{eq:MOCP}\tag{MOCP} 
	\begin{aligned}
	    &\min_{u \in \SetU} \hat{J}(x_0,u) 
	    \\
	    \hat{g}_i(x_0,\mathfrak{u}) &\leq 0, \quad i=1,\ldots,l, \qquad ~ ~ t \in (t_0,t_e],  \\
	    \hat{h}_j(x_0,\mathfrak{u}) &= 0, \quad j=1,\ldots,m, \qquad t \in (t_0,t_e], 
	\end{aligned}
\end{equation}
where
\[
    \hat{J}_i(x_0,u) = \int_{t_0}^{t_e} \hat{C}_i(x_0,\mathfrak{u})\,dt + \hat{\Phi}_i(x_0,\mathfrak{u})
\]
with $\hat{C}_i(x_0,\mathfrak{u}) := C_i(\varphi_u(x_0,t),u(t))$ and $\hat{\Phi}_i(x_0,\mathfrak{u}) := \Phi_i(\varphi_u(x_0,t_e))$ for $i = 1,\ldots,k$. Here, $\mathfrak{u} := u|_{[t_0,t]}$ is introduced to preserve the time dependency. The constraints $\hat{g}(x_0,\mathfrak{u})$ and $\hat{h}(x_0,\mathfrak{u})$ are defined accordingly. $u$ is called a \textit{feasible curve} if it satisfies the equality and inequality constraints $\hat{g}_i,i=1,\ldots,l$, and $\hat{h}_j,j=1,\ldots,m$.

In contrast to single objective optimization problems, there exists no total order of the objective function values in $\R^k$ with $k \geq 2$. Thus, we introduce the following partial order:
\begin{definition}\label{def:PartialOrder}
    Let $v, w \in \R^k$. The vector $v$ is \emph{less than} $w$ (denoted by $v < w$), if $v_i < w_i$ for all $i \in \left\lbrace 1, \ldots, k \right\rbrace$. The relation $\leq$ is defined in an analogous way. 
\end{definition}

Since there is no total order, we cannot expect to find isolated optimal curves for \eqref{eq:MOCP}. Instead, the solution is the \emph{set of optimal compromises} (also called the \emph{Pareto set} or \emph{set of non-dominated curves}):

\begin{definition}\label{def:Nondominance}
    Consider the multiobjective optimization problem \eqref{eq:MOCP}. Then
    \begin{enumerate}
        \item a feasible curve $u^*$ \emph{dominates} a curve $u$, if $\hat{J}(x_0,u^*) \leq \hat{J}(x_0,u)$ and $\hat{J}(x_0,u^*) \neq \hat{J}(x_0,u)$.
        \item a feasible curve $u^*$ is called \emph{globally Pareto optimal} if there exists no feasible curve $u \in \SetU$ dominating $u^*$. The image $\hat{J}(x_0,u^*)$ of a globally Pareto optimal curve $u^*$ is called a \emph{globally Pareto optimal value}. If this property holds in a neighborhood $U(u^*) \subset \SetU$, then $u^*$ is called \emph{locally Pareto optimal}.
        \item the set of non-dominated feasible curves is called the \emph{Pareto set} $\PS$, its image the \emph{Pareto front} $\PF$.
    \end{enumerate}
\end{definition}
\noindent Definition~\ref{def:Nondominance} is visualized in Figure~\ref{fig:MOP_Example} for a finite-dimensional problem, i.e.~curves reduce to points in the Euclidean space. We see that for each point that is contained in the Pareto set (the red line in \ref{fig:MOP_Example}~(a)), one can only improve one objective by accepting a trade-off in at least one other objective, see the red line in Figure~\ref{fig:MOP_Example}~(b).
\begin{figure}[h!]
    \centering
    \parbox[b]{0.24\textwidth}{\centering \includegraphics[width=0.3\textwidth]{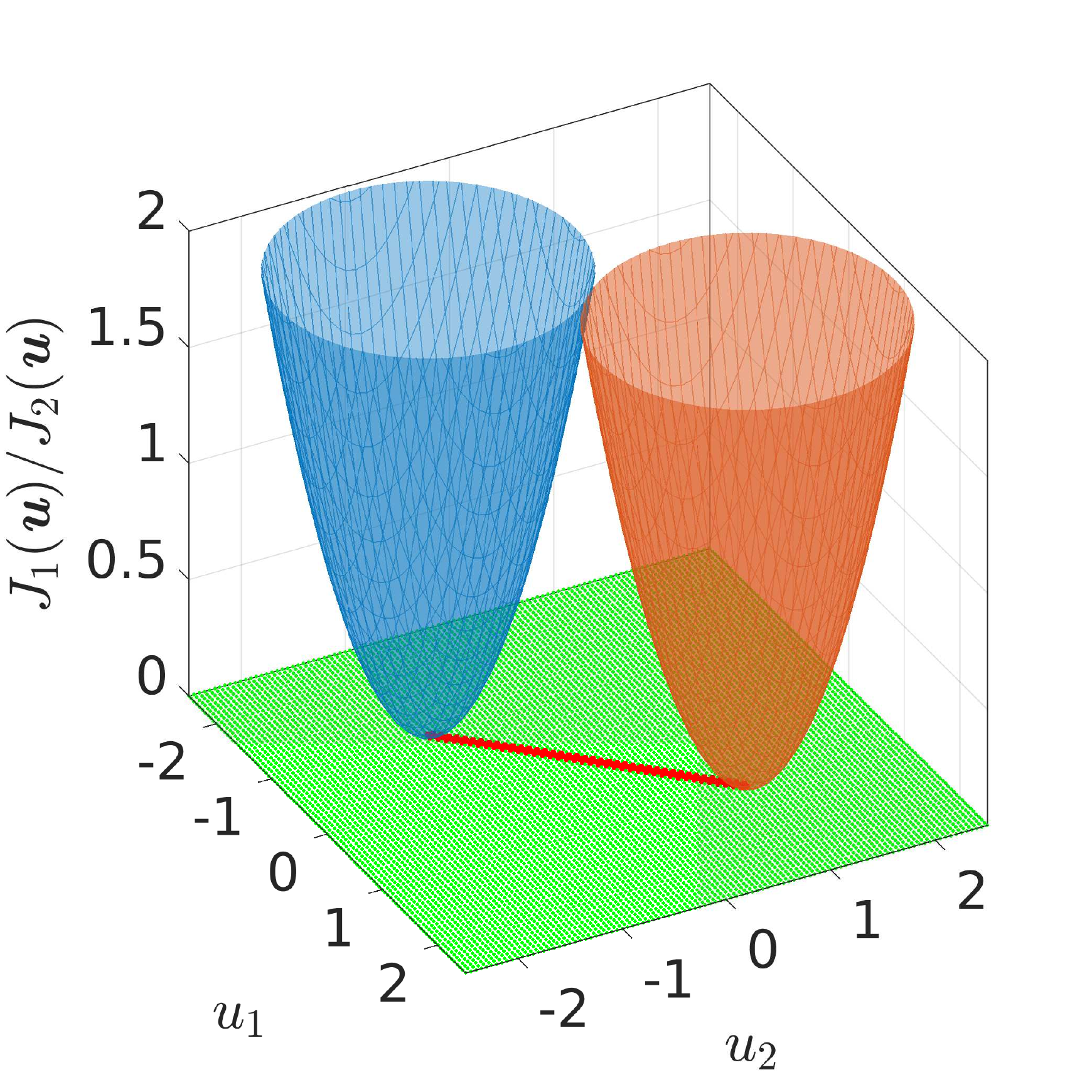} \\ (a)}\hfil
    \parbox[b]{0.24\textwidth}{\centering \includegraphics[width=0.28\textwidth]{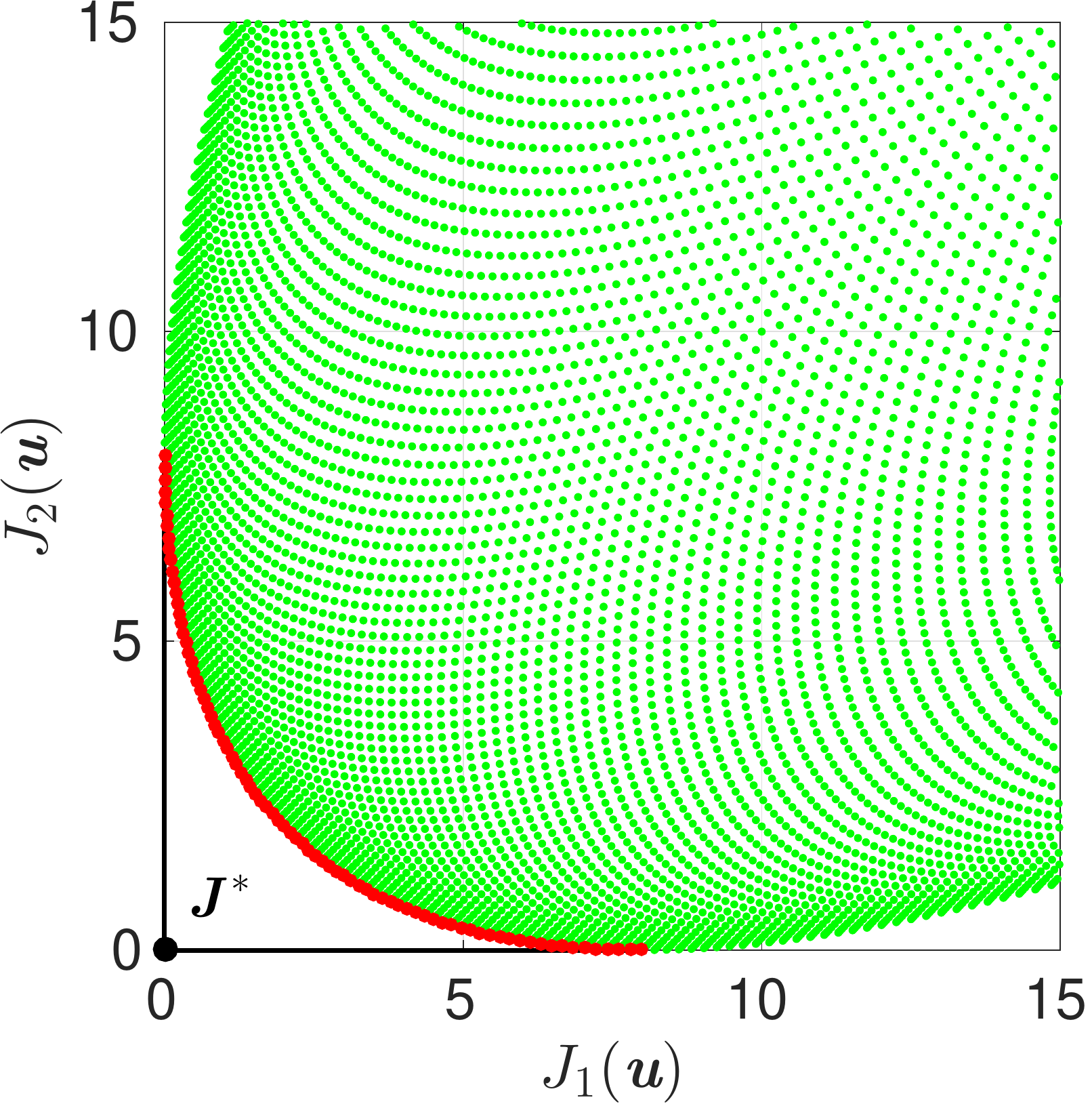} \\ (b)}
    \caption{Red lines: Pareto set $\PS$ (a) and Pareto front $\PF$ (b) of an example problem (two paraboloids) of the form $\min_{u\in\R^2} J(u)$, $J: \R^2\rightarrow\R^2$. The point $J^* = (0, 0)^\top$ is called the utopian point.}
    \label{fig:MOP_Example}
\end{figure}

There exist many fundamentally different approaches to solve MOCPs. Single-objective optimal control problems can either be solved using a \emph{direct solution method} and discretization, cf., e.g., \cite{OJM11}. Alternatively, indirect methods via the \emph{Pontryagin Maximum Principle} can be applied (for an overview of different methods, see e.g.~\cite{Lib12, Betts98}). Solution methods for MOCPs typically rely on a direct approach by which the optimal control problem is transformed into a finite-dimensional multiobjective optimization problem \cite{ORZG12,LSKV10,SWO+13}. 
Well-established methods for such problems are \emph{scalarization techniques}, \emph{continuation methods} (which make use of the fact that under certain conditions, the Pareto set is a smooth manifold of dimension $k-1$ \cite{Hil01} that can be approximated using \emph{predictor-corrector} schemes), \emph{evolutionary algorithms} \cite{CLV07}, or \emph{set-oriented methods} \cite{DSH05,SWO+13}. In scalarization, ideas from single objective optimization theory are extended to the multiobjective situation by transforming the MOCP into a sequence of scalar-valued problems such that the Pareto set is approximated by a finite set of Pareto optimal curves. There exists a large variety of scalarization approaches such as the weighted-sum method, the $\epsilon$-constraint method, normal boundary intersection, or reference point methods \cite{Ehr05}. Since we will use the latter to solve MOCPs, a sketch is given in Figure~\ref{fig:MOP_RP}. In the reference point method, Pareto optimal solutions are obtained by minimizing the euclidean distance between a feasible point $J(u^{(i)})$ and an infeasible \emph{target} $T^{(i)} < J(u^{(i)})$:
\begin{equation}\label{eq:ReferencePoint} \tag{RP}
	\begin{aligned}
		&\min_{u^{(i)} \in \SetU} \left\| \hat{J}(x_0,u^{(i)}) - T^{(i)} \right\|_2^2
		\\
		\hat{g}_i(x_0,\mathfrak{u}^{(i)}) &\leq 0, \quad i=1,\ldots,l, \qquad ~ ~ t \in (t_0,t_e],  \\
		\hat{h}_j(x_0,\mathfrak{u}^{(i)}) &= 0, \quad j=1,\ldots,m, \qquad t \in (t_0,t_e], 
	\end{aligned}
\end{equation}
Once two points on the Pareto front are known, these can be used to approximate the tangent space of the front and construct the target  $T^{(i+1)}$ of the next problem, cf.~Figure~\ref{fig:MOP_RP}~(b). This is realized by shifting the point first parallel (i.e., along the vector $\hat{J}(x_0,u^{(i)})-\hat{J}(x_0,u^{(i-1)})$) and then orthogonal (parallel to the direction $T^{(i)}-\hat{J}(x_0,u^{(i)})$) to the Pareto front. In order to accelerate the solution for the next scalar problem \eqref{eq:ReferencePoint}, a predictor $u^{(p,i+1)}$ is computed by linear extrapolation from the points $u^{(i)}$ and $u^{(i-1)}$.
This way, an almost equidistant covering of the front can be obtained. For a more detailed description see, e.g., \cite[pp.~24--26]{Pei17}.
\begin{figure}[h!]
	\centering
	\parbox[b]{0.24\textwidth}{\centering \includegraphics[width=0.3\textwidth]{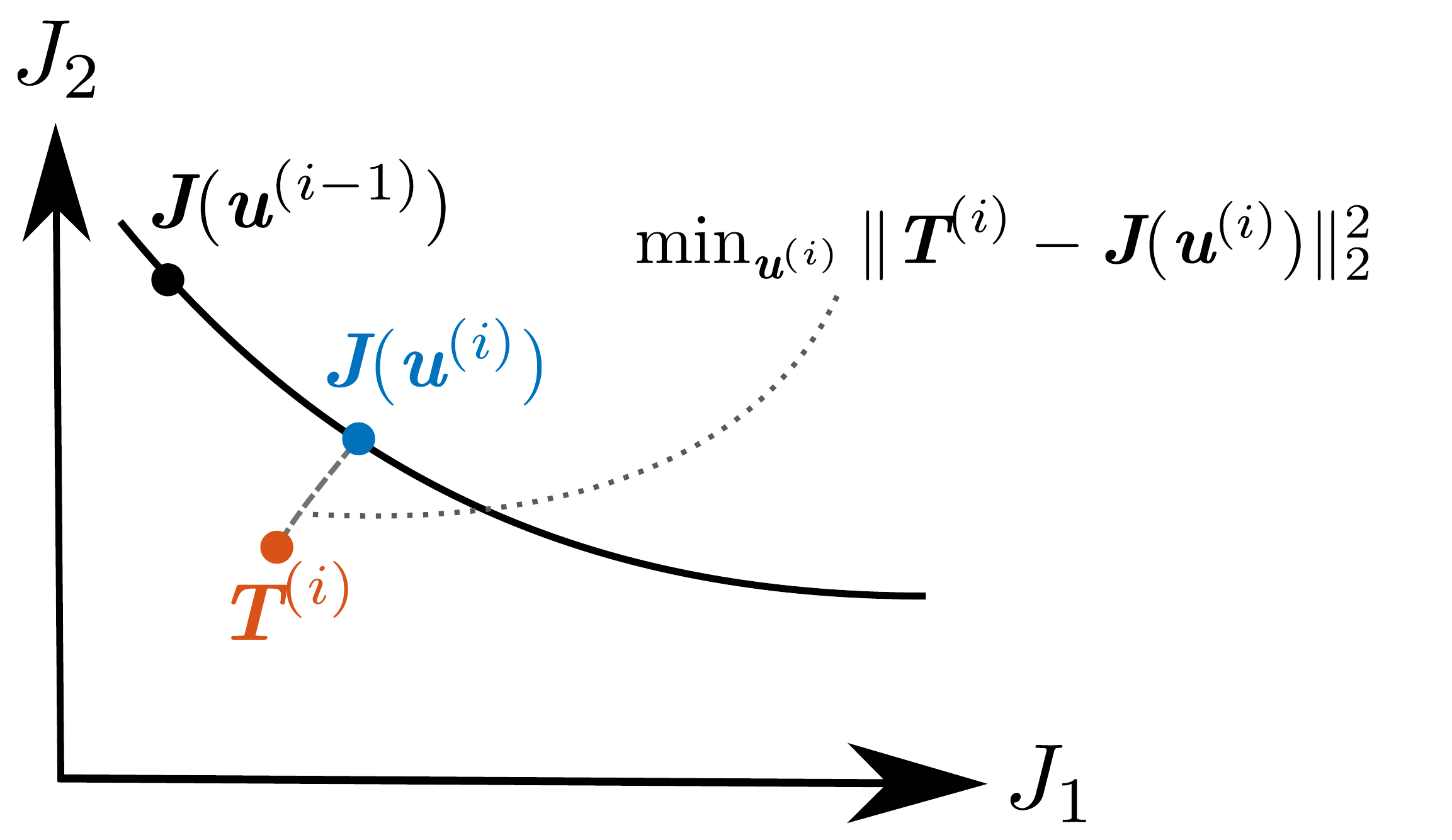} \\ (a)}\hfil
	\parbox[b]{0.24\textwidth}{\centering \includegraphics[width=0.28\textwidth]{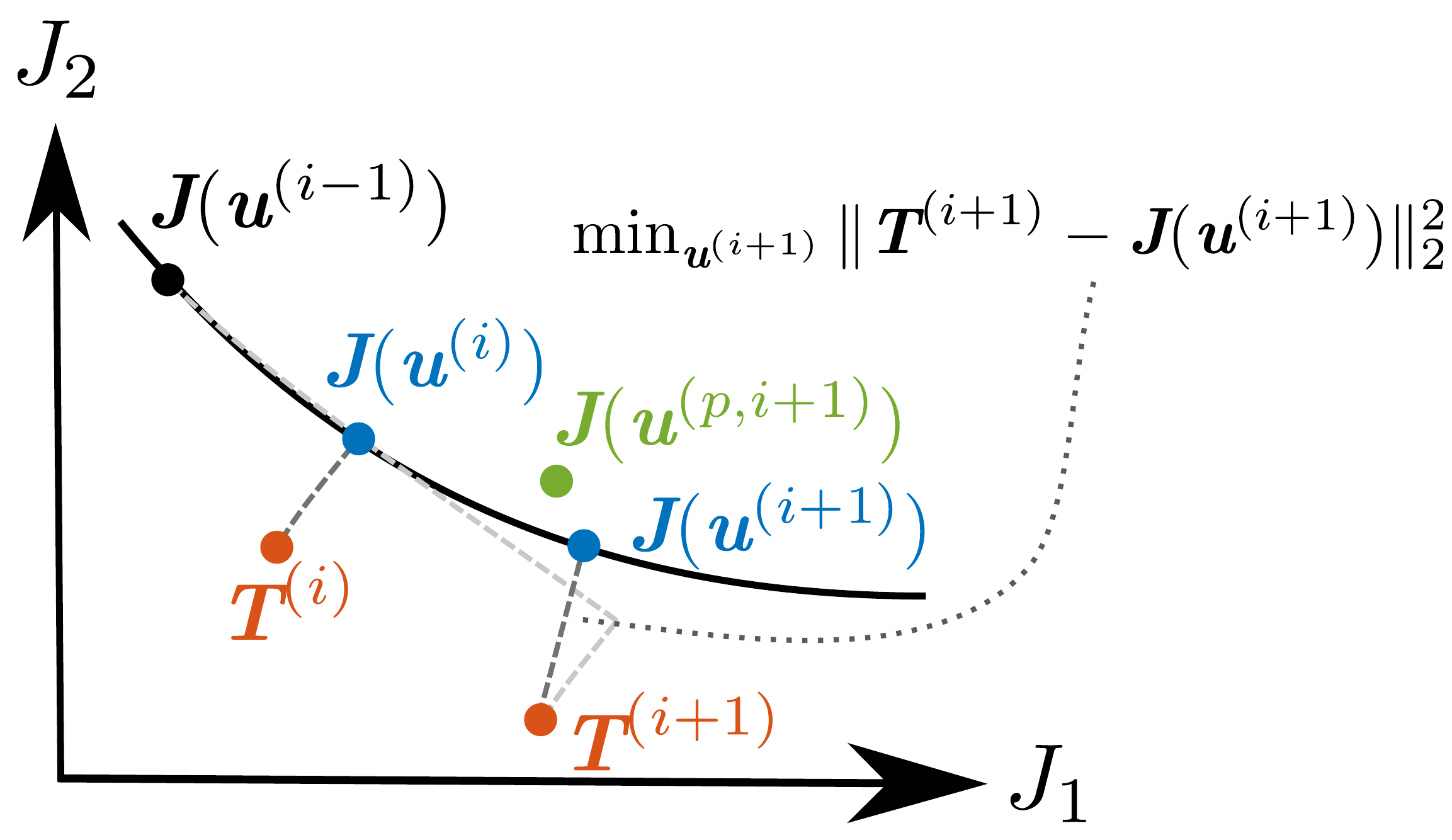} \\ (b)}
	\caption{Reference point method. (a) Solution of the $i^{\mathsf{th}}$ scalar problem. (b) Construction of the next target $T^{(i+1)}$, prediction step $u^{(p,i+1)}$, and solution of the next scalar control problem.}
	\label{fig:MOP_RP}
\end{figure}

\subsection{Model predictive control}
\label{subsec:MOMPC_MPC}
The solution of \eqref{eq:MOCP} provides an open loop control for the system under consideration. However, for real systems it is often insufficient to determine a control input a priori due to unforeseen events, disturbances, or model inaccuracies. A remedy to this issue is MPC \cite{GP17}, where open-loop problems are solved repeatedly on finite horizons (cf.~Figure~\ref{fig:MPC}). Using a model of the system dynamics, an open-loop optimal control computed over the \emph{prediction horizon} of length $ph$, where $p \in \N^{>0}$ and $h \in \R^{>0}$ is the sample time or \emph{control horizon}. This implies that \eqref{eq:MOCP} is solved with
\begin{equation*}
	\begin{aligned}
		t_0 &= t_s, \\ t_e &= t_s + hp = t_{s+p}, \\ x_0 &= x(t_s),
	\end{aligned}
\end{equation*}
for a moving horizon $s=0,1,\ldots$. The first part of the solution is then applied to the plant while the optimization is repeated with the prediction horizon moving forward by one sample time $h$. For this reason, MPC is also referred to as \emph{moving horizon control} or \emph{receding horizon control}.

\begin{figure}[t]
	\centering
	\includegraphics[width=.35\textwidth]{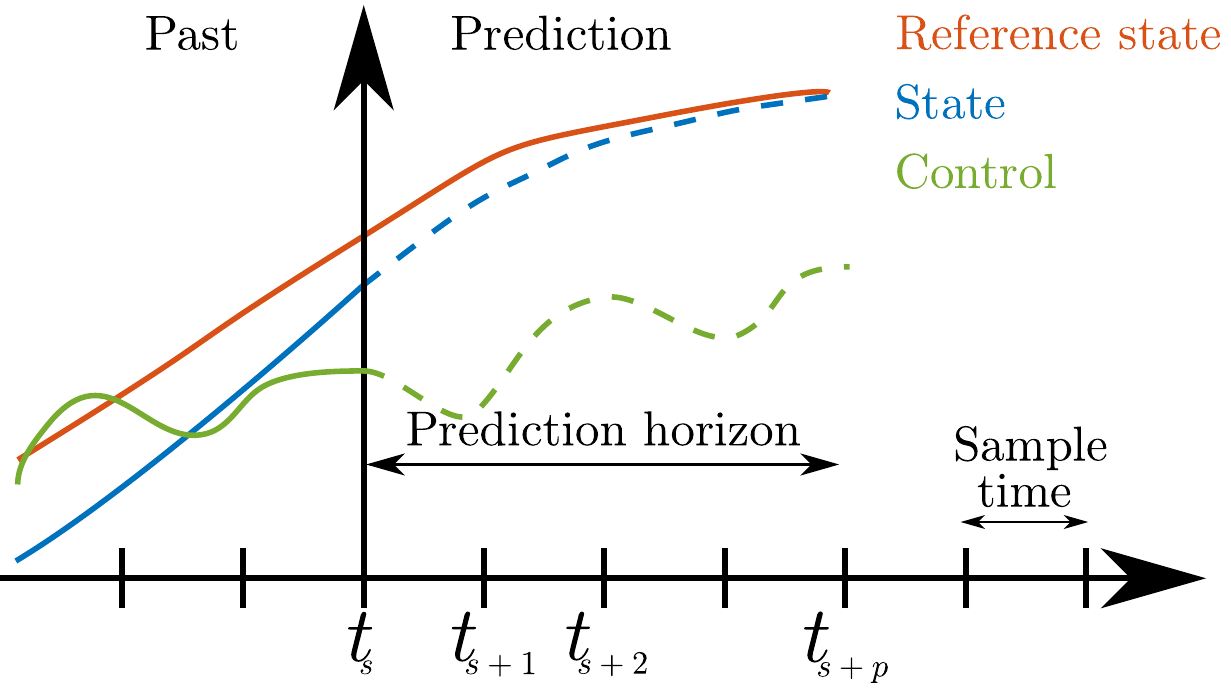}
	\caption{Sketch of the MPC method. Due to the real-time constraints, the optimization problem has to be solved faster than the sample time $h$.}
	\label{fig:MPC}
\end{figure}

Well known extensions of MPC which we will utilize in this article are \emph{economic MPC} \cite{RA09,DAR11}, where the goal is to increase economic performance instead of stabilizing the system, and \emph{explicit MPC} \cite{BMDP02}, see also \cite{AB09} for a survey. In explicit MPC, the MPC problem is reformulated as a multiparametric optimization problem which can be solved in an offline phase and the solutions are stored in a library. During the MPC loop, the computation of an optimal solution is then replaced by extracting the optimal input from the library.

All MPC approaches have in common that they yield a closed-loop behavior. On the downside, we have to solve the optimal control problem within the sample time $h$. This can be in the order of seconds or minutes (in the case of chemical processes) down to a few microseconds, for example in power electronics applications. Since this is already challenging for scalar-valued MPC problems, considering multiple objectives is clearly infeasible without taking further measures. These measures can be, e.g., to apply scalarization using weighted sums \cite{BP09a}, the $\epsilon$-constraint method \cite{Zav15}, reference point methods \cite{ZFT12}, or game-theoretic approaches \cite{MMC11}. Alternatively, one can compute a crude approximation of the entire front \cite{LBK08,GGG+12,NCS+14} or compute Pareto optimal controller parameters offline \cite{KWTD11,HNS+13}. An extensive survey of feedback control with multiple objectives can be found in \cite{PD18}. Before describing our method for addressing the real-time requirements in Section~\ref{sec:Algorithm}, we will first discuss the importance of symmetries in the next section since these will play a crucial role for the cost of the offline phase.

\section{Symmetries in dynamical systems and MPC}
\label{sec:Symmetries}

Symmetries in dynamical systems and optimal control problems can be used to reduce the complexity of the underlying problems and thus making numerical computations faster and more efficient. In this work, we consider continuous symmetries that can be described by a Lie group action. For dynamical systems, this means that translations or rotations of a trajectory lead to another trajectory of the control system. This is a very useful property in control, because a solution trajectory that has been designed for one specific situation can be used for another situation as well. For instance, the same turn maneuver for a helicopter can be applied in many situations since it does not explicitly depend on the absolute position in space. 

While in general symmetry in control is a well-established concept (see, e.g., \cite{BuLe04, Bloch}), it was first exploited by Frazzoli et al.~\cite{Frazzoli2001,FrDaFe05} for the design of efficient motion planning methods which take the dynamics of the control system into account. Following the idea of quantization (see also \cite{FrBu02,Kob08,FOK10}), such so-called \emph{motion primitives} are generated by solving optimal control problems for intermediate problems which can be combined into various sequences. The problem is thus reduced to searching for the optimal sequence out of all admissible sequences in a library of motion primitives which can be realized using global search methods. Extensions towards hybrid systems have been considered in \cite{FO12-ECMI,FHO15}.

In this work, rather than using motion planning approaches, we exploit the motion primitive concept to design explicit MPC algorithms for nonlinear MOCPs. This means that by identifying symmetries in the MOCP, Pareto sets are valid in multiple situations. Consequently, we only have to compute one representative which significantly reduces the computational effort. Before adapting the concept to symmetries in model predictive control problems, we first introduce symmetries in dynamical control systems.

\subsection{Symmetries in dynamical control systems}
We formally describe symmetries by a finite-dimensional Lie group $G$ and its group action $\psi: \R^{n_x} \times G \rightarrow \R^{n_x}$. For each $g\in G$, we denote by $\psi_g: \R^{n_x} \rightarrow \R^{n_x}$ the diffeomorphism defined by $\psi_{g}: = \psi(\cdot,g)$.  
A dynamical control system described by 
\begin{equation}
	\dot{x}(t)  = f(x(t),u(t))\quad \forall t \in [t_0,t_e],\quad x(t_0)=x_0, \label{eq:diff}
\end{equation}
is \textit{invariant under the group action $\psi$}, or equivalently, \textit{$G$ is a symmetry group for the system \eqref{eq:diff}}, if for all $g\in G$, $x_0\in \R^{n_x}$, $t\in [t_0,t_e]$ and $u \in \mathcal{U}$ it holds
\begin{equation}
	\psi_g(\varphi_u(x_0,t)) = \varphi_u(\psi_g(x_0),t). \label{eq:Invariance}
\end{equation}
This means that the group action on the state commutes with the flow. Note that the invariance under a group action implies equivalence of trajectories in the following sense.

\begin{definition}[Equivalence of trajectories]
	Two trajectories $\pi_{1}: t \in [{t_{0,1},t_{e,1}}] \mapsto (x_{1}(t), u_{1}(t))$ and
	$\pi_{2}: t \in [{t_{0,2},t_{e,2}}] \mapsto (x_{2}(t), u_{2}(t))$ of equation \eqref{eq:diff} are \textit{equivalent}, if it holds that 
	\begin{itemize}
		\item[(i)] {$t_{e,1} - t_{0,1} = t_{e,2} - t_{0,2}$} and 
		\item[(ii)] there exist an $g \in {G}$ and an $T \in \R$ such that $x_{1}(t) = \psi_{g}(x_{2}(t-T))$ and $u_{1}(t)  = u_{2}(t-\text{T}) \, \forall \, t \in [{t_{0,1},t_{e,1}}] $.
	\end{itemize}
	\label{equiv}
\end{definition}	

This means that two trajectories are equivalent if they can be exactly superimposed through time translation and the action of the symmetry group. 
All equivalent trajectories can be summed up in an equivalence class. By a slight abuse of notation, we call the equivalence class, but also its representative a \emph{motion primitive} (cf.\ \cite{FrDaFe05}). 
Thus, only one representative has to be stored in a motion library and can then be used in different regions of the state space through transformation by the symmetry action.

\begin{remark}\label{remark:equif}
	Symmetry of a dynamical control system can also be described by the equivariance of the underlying vector field, i.e., by the condition
	\begin{equation}\label{eq:equivariance}
		f(\psi_g(x),u) = D_x\psi_g(f(x,u)) \quad \forall  x\in \R^{n_x}, g\in G,
	\end{equation}
	where $D_x\psi_g: \R^{n_x} \rightarrow \R^{n_x}$ is the tangent lift of $\psi_g$ which acts on $v=\dot{x}$ as $D_x\psi_g(v) = \frac{d}{dx}\psi_g(x) \cdot v$. Is is easy to see that the equivariance condition \eqref{eq:equivariance} is equivalent to the invarince condition \eqref{eq:Invariance}. A direct application of the definition of the flow, equation \eqref{eq:Invariance} and its time derivative shows
	\begin{align*}
		f(\psi_g(x),u) &= f(\psi_g(\varphi_u(x_0,t)),u)\stackrel{\eqref{eq:Invariance}}{=} f (\varphi_u(\psi_g(x_0),t),u) = \frac{d}{dt} \varphi_u (\psi_g(x_0),t) \stackrel{\eqref{eq:Invariance}}{=} \frac{d}{dt} \psi_g(\varphi_u(x_0,t)) \\
		&= \frac{d}{dx} \psi_g(\varphi_u(x_0,t)) \cdot \frac{d}{dt} \varphi_u(x_0,t) = D_x\psi_g( f(x,u)).
	\end{align*}
\end{remark}

Typical symmetries of mechanical systems are translational and rotational symmetries correspondig to Lie groups $G = \R^n$ (translational symmetries), $G=SO(n)$ (rotational symmetries) and $G=SE(n) \approx SO(n) \times \R^n$ (combined rotational and translational symmetries). The corresponding action is then given by $\psi_g(x) = Rx + \Delta x$ with $R\in SO(n)$ and $\Delta x \in \R^n$. Here, $SO(n)$ is the special orthogonal group, which can be represented by the set of matrices $\{ R\in\R^{n,n} \, | \, R^{\top} R=I, \text{det}(R)=1  \}$. The dimension of a Lie group is given by the number of elements required to represent a Lie group element $g\in G$. For the examples above, we have $\text{dim}(\R^n) = n$, $\text{dim}(SO(n)) = n(n-1)/2$ and $\text{dim}(SE(n)) = n(n-1)/2 +n$.

\subsection{Symmetries in MPC problems}

In the following, we want to take advantage of symmetries in optimal control problems. More precisely, we want to identify Pareto optimal solutions of \eqref{eq:MOCP} that remain Pareto optimal when the initial conditions are transformed by the symmetry group action such that 
\begin{align}
	\arg \min_{u} \hat{J}(x_0,u) = \arg \min_{u} \hat{J}(\psi_g(x_0),u) \quad \forall g \in G. \label{eq:Invariance_MOCP0}
\end{align}
Thus, we require the Pareto set to be invariant under group actions on the initial conditions. Symmetries in single-objective, linear-quadratic explicit MPC have been studied in \cite{DB12}, the relation to our approach will be discussed in Example~\ref{ex:LQ}.

The following theorem provides conditions under which equation \eqref{eq:Invariance_MOCP0} holds. In principle, it states that each representative of a motion primitive of the dynamical control system (Condition 1) has to provide the same cost (up to linear transformations) (Condition 2), and if a trajectory $(x(t),u(t))$ satisfies the constraints, then every representative of the same equivalence class has to satisfy the constraints as well (Condition 3).

\begin{theorem}[Symmetry of \eqref{eq:MOCP}]\label{prop:symMOCP}
	Let $\SetX = W^{1,\infty}([t_0,t_e], \R^{n_x})$ and $\SetU = L^{\infty}([t_0,t_e], \R^{n_u})$. If
	\begin{enumerate}
		\item the dynamics are invariant under the Lie Group action $\psi$, i.e.~equation~\eqref{eq:Invariance} holds for $t\in [t_0,t_e]$;
		\item there exist $\alpha,\beta,\delta \in \R$, $\alpha\not=0$, such that the cost functions $C_i$ and the Mayer terms $\Phi_i$, $i=1,\ldots,k$, are invariant under the Lie Group action $\psi$ up to linear transformations, i.e.,
		\begin{equation}\label{eq:symmC}
			C_i(\psi_g(x),u) = \alpha C_i(x,u)+\beta
		\end{equation}
		and
		\begin{equation}\label{eq:symmPhi}
			\Phi_i(\psi_g(x_e)) = \alpha\Phi_i(x_e)+\delta \qquad \mbox{for}~i=1,\ldots,k;
		\end{equation}
		\item the constraints ${g}_i$, $i=1,\ldots,l$ and ${h_j}$, $j=1,\ldots,m$, are invariant under the Lie Group action $\psi$, i.e.,
		\begin{align}
			{g}_i(\psi_g(x),u) &= {g}_i(x,u) \qquad \,\mbox{for}~i=1,\ldots,l, \label{eq:symmg} \\
			{h}_j(\psi_g(x),u) &= {h}_j(x,u) \qquad \mbox{for}~j=1,\ldots,m,\label{eq:symmh}
		\end{align}
	\end{enumerate}
	then we have
	\begin{align}
		\arg \min_{u} \hat{J}(\psi_g(x_0),u) = \arg \min_{u} \hat{J}(x_0,u) \quad \forall g \in G. \tag{\ref{eq:Invariance_MOCP0}} \label{eq:Invariance_MOCP}
	\end{align}
	We say that problem~\eqref{eq:MOCP} is \textit{invariant under the Lie group action $\psi_g$}, or equivalently, \textit{$G$ is a symmetry group} for problem~\eqref{eq:MOCP}. 
	
\end{theorem}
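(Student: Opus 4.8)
The plan is to reduce the statement to one algebraic observation: under Conditions~1--3, replacing the initial condition $x_0$ by $\psi_g(x_0)$ alters the vector-valued objective only by a common affine map that does not depend on the control, while it leaves the feasible set untouched. Since the Pareto set of Definition~\ref{def:Nondominance} is determined entirely by the dominance relation, and an affine map with positive scaling is an order isomorphism for the partial order of Definition~\ref{def:PartialOrder}, the two Pareto sets must coincide. Concretely, I would fix $g\in G$ and an arbitrary $u\in\SetU$ and start from $\hat{J}_i(\psi_g(x_0),u)=\int_{t_0}^{t_e} C_i(\varphi_u(\psi_g(x_0),t),u(t))\,dt+\Phi_i(\varphi_u(\psi_g(x_0),t_e))$. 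Using the flow invariance \eqref{eq:Invariance} I would rewrite $\varphi_u(\psi_g(x_0),t)=\psi_g(\varphi_u(x_0,t))$ inside every argument, and then apply \eqref{eq:symmC} and \eqref{eq:symmPhi} to extract the affine factors. Because $\alpha,\beta,\delta$ depend neither on $i$ nor on $t$, integrating the constant $\beta$ over the fixed horizon yields $\hat{J}_i(\psi_g(x_0),u)=\alpha\,\hat{J}_i(x_0,u)+c$ with $c:=\beta(t_e-t_0)+\delta$ the same scalar for every component, so that $\hat{J}(\psi_g(x_0),u)=\alpha\,\hat{J}(x_0,u)+c\,\mathbf{1}$, where $\mathbf{1}=(1,\dots,1)^\top\in\R^k$ and the right-hand transformation is independent of $u$.

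Second, I would handle the constraints in exactly the same manner: substituting \eqref{eq:Invariance} and then \eqref{eq:symmg} and \eqref{eq:symmh} gives, pointwise in $t$, the identities $\hat{g}(\psi_g(x_0),\mathfrak{u})=\hat{g}(x_0,\mathfrak{u})$ and $\hat{h}(\psi_g(x_0),\mathfrak{u})=\hat{h}(x_0,\mathfrak{u})$. Consequently the set of feasible curves for the problem with initial condition $\psi_g(x_0)$ is literally the same set of feasible curves as for the problem with initial condition $x_0$, so both minimization problems are taken over one common domain.

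Finally, I would verify that the map $\Lambda\colon v\mapsto\alpha v+c\,\mathbf{1}$ preserves the orders of Definition~\ref{def:PartialOrder}: for $\alpha>0$ a common positive scaling followed by a common shift acts componentwise monotonically, whence $v\le w \iff \Lambda(v)\le\Lambda(w)$ and $v\ne w \iff \Lambda(v)\ne\Lambda(w)$ (and likewise for $<$). Combining the three steps, a feasible curve $u^*$ dominates $u$ in the problem at $x_0$ if and only if it dominates $u$ in the problem at $\psi_g(x_0)$, because the feasible domains agree and the objective values are related by the fixed order isomorphism $\Lambda$. The sets of non-dominated feasible curves are therefore identical, which is precisely the assertion \eqref{eq:Invariance_MOCP}.

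The main obstacle I anticipate is the sign of $\alpha$: the order isomorphism in the last step genuinely requires $\alpha>0$, since $\alpha<0$ would convert minimization into maximization and send the Pareto set to the set of Pareto-maximal curves rather than fixing it. I would thus either restrict to $\alpha>0$ or argue separately that the symmetric cost structures of interest always induce a positive scaling. Everything else is bookkeeping that I expect to be routine, namely checking that $c$ is independent of $u$ (it is, as the horizon length $t_e-t_0$ is fixed) and that the constraint identities hold for almost every $t$ consistently with $\mathfrak{u}=u|_{[t_0,t]}$.
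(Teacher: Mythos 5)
Your proposal is correct and follows essentially the same route as the paper: both arguments rest on the flow invariance \eqref{eq:Invariance} to rewrite $\varphi_u(\psi_g(x_0),t)=\psi_g(\varphi_u(x_0,t))$, on the affine relations \eqref{eq:symmC}--\eqref{eq:symmPhi} to obtain $\hat J(\psi_g(x_0),u)=\alpha\,\hat J(x_0,u)+c\,\mathbf{1}$ with $c$ independent of $u$ and of the component index, and on \eqref{eq:symmg}--\eqref{eq:symmh} to show the two feasible sets coincide; the paper merely packages the final step as a proof by contradiction (assume a dominating $\tilde u$ for the transformed problem and run the chain of equivalences back to a contradiction at $x_0$) rather than as your direct order-isomorphism observation, and it is slightly less careful than you about the constant (writing $\varepsilon=\beta+\delta$ where the integration over the horizon actually gives $\beta(t_e-t_0)+\delta$, which is harmless). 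Your caveat about the sign of $\alpha$ is well taken and applies to the paper's own proof as well: the step that cancels the common factor $\alpha$ from both sides of the strict inequality silently assumes $\alpha>0$, so the hypothesis $\alpha\neq 0$ in the statement should really read $\alpha>0$ for the conclusion to hold.
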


\begin{proof}
	\underline{Feasibility:} Let $u$ be a feasible curve of problem \eqref{eq:MOCP} and let $\varphi_u(x_0,t)$ be the solution of the initial value problem \eqref{eq:diff}. We now consider problem \eqref{eq:MOCP} with initial value $\psi_g(x_0)$, i.e.~the initial value $x_0$ is tansformed by the symmetry group action. 
	Substituting $u$ into the equality and inequality constraints of the transformed \eqref{eq:MOCP} yields (for $i=1,\ldots,l$ and $j=1,\ldots,m$):
	\begin{equation*}
	\begin{split}
		\hat{g}_i(\psi_g(x_0),\mathfrak{u}) &={g}_i(\varphi_u(\psi_g(x_0),t),u(t))  \\
		&\stackrel{\eqref{eq:Invariance}}{=}  {g}_i(\psi_g(\varphi_u(x_0,t)),u(t)) \\
		&\stackrel{\eqref{eq:symmg}}{=}  {g}_i(\varphi_u(x_0,t),u(t))\\
		& = \hat{g}_i(x_0,\mathfrak{u})\le 0
	\end{split} 
	\qquad\qquad \mbox{and} \qquad\qquad
	\begin{split}
		\hat{h}_j(\psi_g(x_0),\mathfrak{u}) &={h}_j(\varphi_u(\psi_g(x_0),t),u(t))  \\
		&\stackrel{\eqref{eq:Invariance}}{=}   {h}_j(\psi_g(\varphi_u(x_0,t)),u(t))\\
		&\stackrel{\eqref{eq:symmh}}{=}   {h}_j(\varphi_u(x_0,t),u(t)) \\
		& =\hat{h}_j(x_0,\mathfrak{u}) = 0.
	\end{split}
	\end{equation*}
	Thus, $u$ is also a feasible curve for problem \eqref{eq:MOCP} with initial value $\psi_g(x_0)$.
	
	\underline{Optimality:}
	Let $u \in \arg \min_{u} \hat{J}(x_0,u)$ and assume there exists an $\tilde{u}$ such that
	\begin{align*}
		&\hat{J}(\psi_g(x_0),\tilde{u}) < \hat{J}(\psi_g(x_0),u)\\
		\Leftrightarrow \quad & {J}(\varphi_{\tilde{u}}(\psi_g(x_0),\cdot),\tilde{u}) < {J}(\varphi_u(\psi_g(x_0),\cdot),{u})\\
		\stackrel{\eqref{eq:Invariance}}{\Leftrightarrow} \quad & \int_{t_0}^{t_e}C_i(\psi_g(\varphi_{\tilde{u}}(x_0,t)),\tilde{u}(t))\, dt + \Phi_i(\psi_g(\varphi_{\tilde{u}}(x_0,t_e))) \\
		&< \int_{t_0}^{t_e}C_i(\psi_g(\varphi_u(x_0,t)),u(t))\, dt + \Phi_i(\psi_g(\varphi_u(x_0,t_e))) \;\forall i\\
		\stackrel{\eqref{eq:symmC},\eqref{eq:symmPhi}}{\Leftrightarrow} \quad & \alpha\left( \int_{t_0}^{t_e}C_i(\varphi_{\tilde{u}}(x_0,t),\tilde{u}(t))\, dt +\Phi_i(\varphi_{\tilde{u}}(x_0,t_e))  \right)+\varepsilon \\
		&< \alpha\left( \int_{t_0}^{t_e}C_i(\varphi_u(x_0,t),{u}(t))\, dt +\Phi_i(\varphi_u(x_0,t_e))  \right)+\varepsilon \;\forall i\\
		\Leftrightarrow \quad &   {J}(\varphi_{\tilde{u}}(x_0,\cdot),\tilde{u}) < {J}(\varphi_u(x_0,\cdot),{u})\\
		\Leftrightarrow \quad & \hat{J}(x_0,\tilde{u}) < \hat{J}(x_0,u)
	\end{align*}
	with $\varepsilon = \beta+\delta$. This is a contradiction to $u \in \arg \min_{u} \hat{J}(x_0,u)$ and consequently $u \in \arg \min_{u}\hat{J}(\psi_g(x_0),{u})$. Following the above steps backwards yields 
	\[u\in \arg \min_{u}\hat{J}(\psi_g(x_0),{u})\Rightarrow u\in \arg \min_{u}\hat{J}(x_0,{u}).\] 
	Thus, the Pareto sets for problems \eqref{eq:MOCP} with initial values $x_0$ and $\psi_g(x_0)$ are identical, i.e., the Pareto set is invariant under group actions on initial conditions.
\end{proof}

Theorem~\ref{prop:symMOCP} states that if the objective function and the constraints are also invariant (up to linear transformations) under the same group action as the dynamical control system, then all trajectories contained in an equivalence class defined by \eqref{eq:Invariance} will also be contained in an equivalence class defined by \eqref{eq:Invariance_MOCP}. However, this class may contain more solutions since we do not explicitly pose restrictions on the state but only require the solutions of \eqref{eq:MOCP} to be identical. This leads to the following corollary.

\begin{corollary}\label{cor:InvPS}
	Let $\SetX = W^{1,\infty}([t_0,t_e], \R^{n_x})$ and $\SetU = L^{\infty}([t_0,t_e], \R^{n_u})$. If
	\begin{enumerate}
		\item there exist $\alpha,\beta,\delta \in \R$, $\alpha\not=0$, such that the cost functions $\hat{C}_i$ and the Mayer terms $\hat{\Phi}_i$, $i=1,\ldots,k$, are invariant under the Lie Group action $\psi$ up to linear transformations, i.e.,
		\begin{equation}\label{eq:symmChat}
			\hat C_i(\psi_g(x_0),\mathfrak{u}) = \alpha \hat C_i(x_0,\mathfrak{u})+\beta
		\end{equation}
		and
		\begin{equation}\label{eq:symmPhihat}
			\hat\Phi_i(\psi_g(x_0),\mathfrak{u}) = \alpha\hat\Phi_i(x_0,\mathfrak{u})+\delta \qquad \mbox{for}~i=1,\ldots,k;
		\end{equation}
		\item the constraints $\hat{g}_i$, $i=1,\ldots,l$ and $\hat{h_j}$, $j=1,\ldots,m$, are invariant under the Lie Group action $\psi$, i.e.
		\begin{align}
			\hat{g}_i(\psi_g(x_0),\mathfrak{u}) &= \hat{g}_i(x_0,\mathfrak{u}) \qquad \,\mbox{for}~i=1,\ldots,l,\label{eq:symmghat} \\
			\hat{h}_j(\psi_g(x_0),\mathfrak{u}) &= \hat{h}_j(x_0,\mathfrak{u}) \qquad \mbox{for}~j=1,\ldots,m,\label{eq:symmhhat}
		\end{align}
	\end{enumerate}
	then \eqref{eq:Invariance_MOCP} holds.
\end{corollary}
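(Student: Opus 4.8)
The plan is to follow the two-part structure of the proof of Theorem~\ref{prop:symMOCP} — feasibility first, then optimality — but to shortcut its first half. In the theorem the invariance of the \emph{reduced} data $\hat{C}_i,\hat{\Phi}_i,\hat{g}_i,\hat{h}_j$ is \emph{derived} from invariance of the original data together with the flow-commutation property~\eqref{eq:Invariance}; here that invariance is postulated outright in \eqref{eq:symmChat}--\eqref{eq:symmhhat}, so I would work directly on the level of the flow-composed functionals and never invoke~\eqref{eq:Invariance}. This is precisely why the statement is strictly more general: every setting satisfying Conditions~1--3 of Theorem~\ref{prop:symMOCP} satisfies the present hypotheses, but not conversely.

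For feasibility, let $u\in\SetU$ be feasible for~\eqref{eq:MOCP} with initial value $x_0$. Substituting $u$ into the transformed constraints and applying \eqref{eq:symmghat} and \eqref{eq:symmhhat} gives $\hat{g}_i(\psi_g(x_0),\mathfrak{u}) = \hat{g}_i(x_0,\mathfrak{u}) \le 0$ and $\hat{h}_j(\psi_g(x_0),\mathfrak{u}) = \hat{h}_j(x_0,\mathfrak{u}) = 0$, so $u$ is feasible for the problem with initial value $\psi_g(x_0)$. Since both identities are equalities, the argument runs verbatim in reverse, and the two problems share the same feasible set.

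For optimality, the key computation is to insert \eqref{eq:symmChat} and \eqref{eq:symmPhihat} into each objective and integrate the constant offset:
\begin{equation*}
	\hat{J}_i(\psi_g(x_0),u) = \int_{t_0}^{t_e}\bigl(\alpha\hat{C}_i(x_0,\mathfrak{u})+\beta\bigr)\,dt + \alpha\hat{\Phi}_i(x_0,\mathfrak{u})+\delta = \alpha\,\hat{J}_i(x_0,u) + \varepsilon,
\end{equation*}
with $\varepsilon = \beta(t_e-t_0)+\delta$ independent of $i$ and of $u$. Hence $\hat{J}(\psi_g(x_0),\cdot) = \alpha\,\hat{J}(x_0,\cdot)+\varepsilon\mathbf{1}$, i.e.\ the two objective vectors differ only by the common affine map $v\mapsto\alpha v+\varepsilon\mathbf{1}$, a positive homothety composed with a uniform translation. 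It then remains to observe that such a map preserves the partial order: for $\alpha>0$ one has $v<w\iff\alpha v+\varepsilon\mathbf{1}<\alpha w+\varepsilon\mathbf{1}$ (and likewise for $\le$ and $\neq$), so $v$ dominates $w$ exactly when $\alpha v+\varepsilon\mathbf{1}$ dominates $\alpha w+\varepsilon\mathbf{1}$ in the sense of Definition~\ref{def:Nondominance}. Combined with the equality of feasible sets, a curve is non-dominated for $\psi_g(x_0)$ if and only if it is non-dominated for $x_0$, which is exactly~\eqref{eq:Invariance_MOCP}.

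The one point that genuinely requires care — and the main obstacle to a verbatim copy of the theorem's chain of equivalences — is the sign of $\alpha$. Order preservation, and hence invariance of the Pareto set rather than of its ``maximal'' counterpart, uses $\alpha>0$: a negative $\alpha$ reverses every component-wise inequality, so minimizing the transformed objective would correspond to maximizing the original, and the two Pareto sets would in general differ. I would therefore make the monotonicity argument explicit for $\alpha>0$ (strengthening the hypothesis accordingly if needed); with the sign fixed, every remaining step is a routine verification and the conclusion follows.
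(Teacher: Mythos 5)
Your proof is correct and takes essentially the same route as the paper: the paper's own proof of this corollary consists of a single sentence instructing the reader to rerun the argument of Theorem~\ref{prop:symMOCP} with the invariance hypotheses applied directly to $\hat{C}_i,\hat{\Phi}_i,\hat{g}_i,\hat{h}_j$ instead of deriving them from \eqref{eq:Invariance}, which is exactly your feasibility-then-optimality argument. Your two refinements are both legitimate and sharper than the paper: the order-preservation step does require $\alpha>0$ rather than merely $\alpha\neq 0$ (the theorem's chain of equivalences silently divides by a positive $\alpha$, and a negative $\alpha$ would swap minimization for maximization), and the offset picked up by integrating the constant $\beta$ is indeed $\varepsilon=\beta(t_e-t_0)+\delta$, not the paper's $\varepsilon=\beta+\delta$.
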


\begin{proof}
	The proof follows along the lines of the proof of Theorem~\ref{prop:symMOCP}, where the invariance under group actions can directly applied to $\hat{g}_i, i=1,\ldots,l$, $\hat{h}_j, j=1,\ldots,m$, $\hat{C}_i$ and $\hat{\Phi}_i, i=1,\ldots,k$, without using the invariance of the dynamical control system.
\end{proof}

\begin{remark}[(Group actions on controls and parameters)] \label{rem:Parameters}
	Rather than considering transformations of state trajectories by group actions $\psi_g$ with $g\in G$ only, also Lie group actions $\chi_h$ on the control trajectories $u$ as well as Lie group actions $\xi_l$ on the parameters $\gamma\in \R^{n_\gamma}$ can be taken into account with $h$ and $l$ being elements of the Lie groups $H$ and $L$, respectively. Thus, we obtain the group action triple $(\psi_g,\chi_h, \xi_l): \R^{n_x} \times \R^{n_u} \times \R^{n_\gamma}\rightarrow \R^{n_x} \times \R^{n_u} \times \R^{n_\gamma}$. For a parameter-dependent flow $\varphi_u(x_0,t;\gamma)$ the invariance condition \eqref{eq:Invariance} for the dynamical control system is then replaced by
	\begin{equation}
		\psi_g(\varphi_u(x_0,t; \gamma)) = \varphi_{\chi_h u}(\psi_g(x_0),t; \xi_l(\gamma)), \label{eq:Invarianceu}
	\end{equation}
	meaning that for two trajectories being equivalent, we also allow for a transformation of the control $u$ and the parameter $\gamma$ by the Lie group actions $\chi_h$ and $\xi_h$, respectively. For a parameter-dependent cost function, the corresponding invariance condition for problem \eqref{eq:MOCP} then reads
	\begin{align}
		\arg \min_{u} \hat{J}(x_0,u,\gamma) = \arg \min_{u} \hat{J}(\psi_g(x_0),\chi_h(u), \xi_l(\gamma)) \quad \forall g \in G, h\in H, l \in L. \label{eq:Invariance_MOCPu}
	\end{align}
	By Theorem~\ref{prop:symMOCP}, \eqref{eq:Invariance_MOCPu} is satisfied if dynamics, cost functions, Mayer terms and constraints are all invariant under the Lie group action $(\psi_g,\chi_h,\xi_l)$, i.e., in addition to \eqref{eq:Invarianceu}, we have for $\alpha,\beta,\delta \in \R$, $\alpha\not=0$:
	\begin{align}
		C_i(\psi_g(x),\chi_h(u),\xi_l(\gamma))& = \alpha C_i(x,u,\gamma)+\beta\\		
		\Phi_i(\psi_g(x(t_e)),\xi_l(\gamma)) &= \alpha\Phi_i(x(t_e),\gamma)+\delta \quad \text{for $i=1,\ldots,k$}\\
		{g}_i(\psi_g(x),\chi_h(u),\xi_l(\gamma)) &= {g}_i(x,u,\gamma) \quad \text{for $i=1,\ldots,l$}\\
		{h}_j(\psi_g(x),\chi_h(u),\xi_l(\gamma)) &= {h}_j(x,u,\gamma) \quad \text{for $j=1,\ldots,m$}.
	\end{align}
\end{remark}

\begin{example}[Linear-quadratic problems]\label{ex:LQ}
	In the case of single-objective linear-quadratic problems
	and group actions that can be described by matrix multiplications, Theorem~\ref{prop:symMOCP} yields the statements of Proposition~2 and Theorem 4 of \cite{DB12}, which relates symmetries in the dynamics, constraints and cost function to symmetries in linear-quadratic model-predictive control problems and explicit controller functions.
\end{example}

\begin{example}[Parameter-dependent problems]
	A typical example for parameter-dependent problems are tracking problems with cost functions describing the squared distance between the state $x$ and some reference $\gamma$ which has to be tracked, i.e.,
	\[
		C(x,\gamma) = \| x-\gamma\|_2^2.
	\]
	For dynamical control systems being invariant under translations and rotations, i.e., $\psi_g(x) = R\cdot x + \Delta x$, invariance of the cost function is obtained by applying the same Lie group action $\psi_g$ to $\gamma$. This ensures that the distance between state and reference is preserved under the Lie group action, i.e., $C(\psi_g(x),\psi_g(\gamma)) = \| \psi_g(x)-\psi_g(\gamma)\|_2^2 = \| R\cdot x+\Delta x - (R \cdot \gamma + \Delta x)\|_2^2 = C(x,\gamma)$
	where the last equality follows from the orthogonality of $R$.
\end{example}

\begin{example}[Invariance of the $\arg \min$]
	In order to emphasize the meaning of Corollary \ref{cor:InvPS}, let us consider the parameter-dependent multiobjective optimization problem from \cite[Example 3.12]{Wit12} with $J: \R^2 \times \R \rightarrow \R^2$:
	\begin{equation} \label{eq:Witting}
	\min_{u \in \R^2} J(u,\gamma) = \min_{u \in \R^2} \left(\begin{array}{c}
	\frac{1}{2} \left(\sqrt{1 + (u_1 + u_2)^2} + \sqrt{1 + (u_1 - u_2)^2}) + u_1 - u_2 \right) + \gamma e^{-(u_1 - u_2)^2} \\
	\frac{1}{2} \left(\sqrt{1 + (u_1 + u_2)^2} + \sqrt{1 + (u_1 - u_2)^2}) - u_1 + u_2 \right) + \gamma e^{-(u_1 - u_2)^2}
	\end{array}\right).
	\end{equation}
	The Pareto sets and fronts for varying values of $\gamma$ are shown in Figure~\ref{fig:Witting}. We see that $\PS$ is invariant under translations in $\gamma$ (i.e., the $\arg \min$ of \eqref{eq:Witting} is invariant under translations in $\gamma$).
	\begin{figure}[h!]
		\centering
		\parbox[b]{0.3\textwidth}{\centering \includegraphics[width=0.3\textwidth]{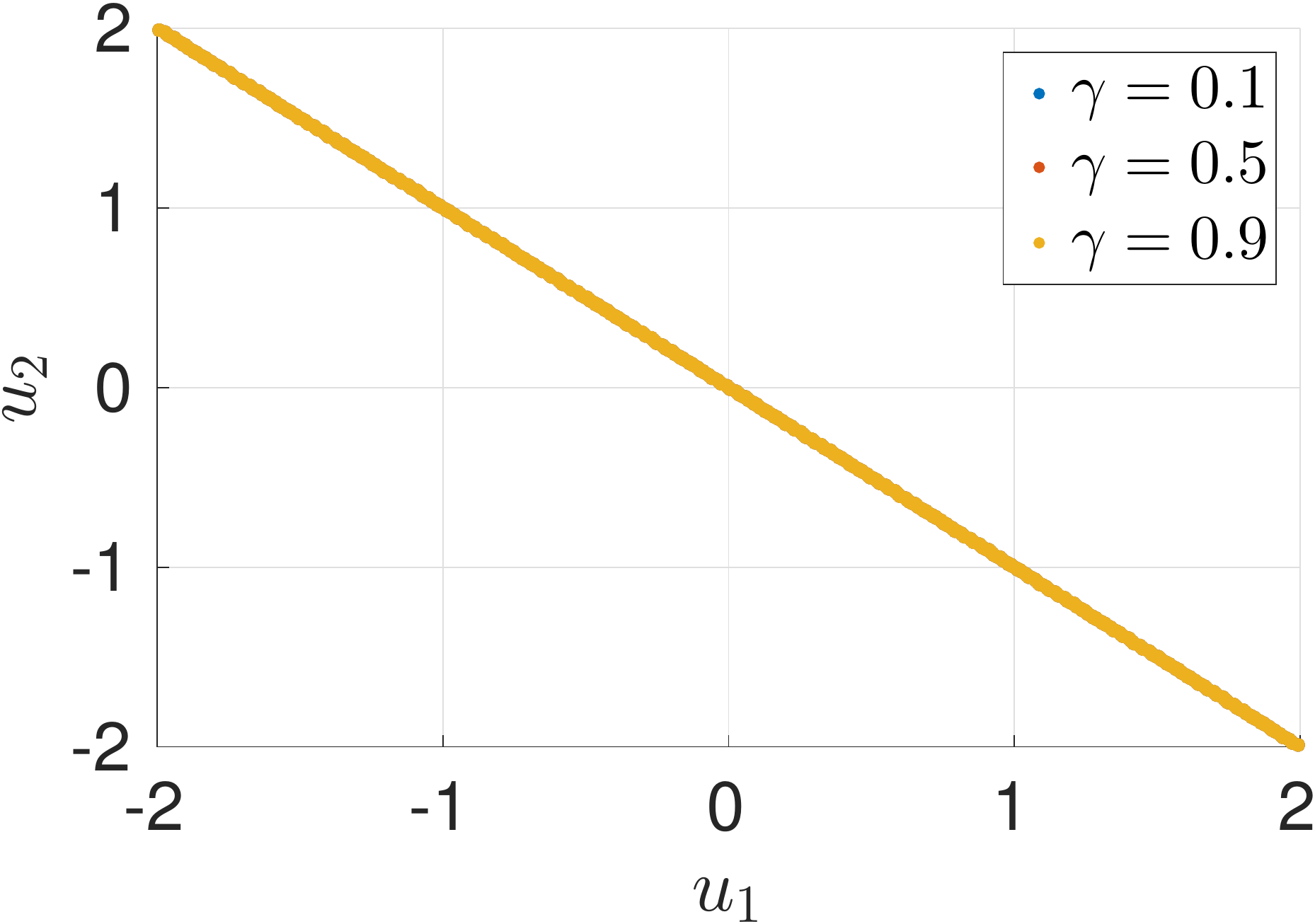} \\ (a)}\hfil
		\parbox[b]{0.3\textwidth}{\centering \includegraphics[width=0.3\textwidth]{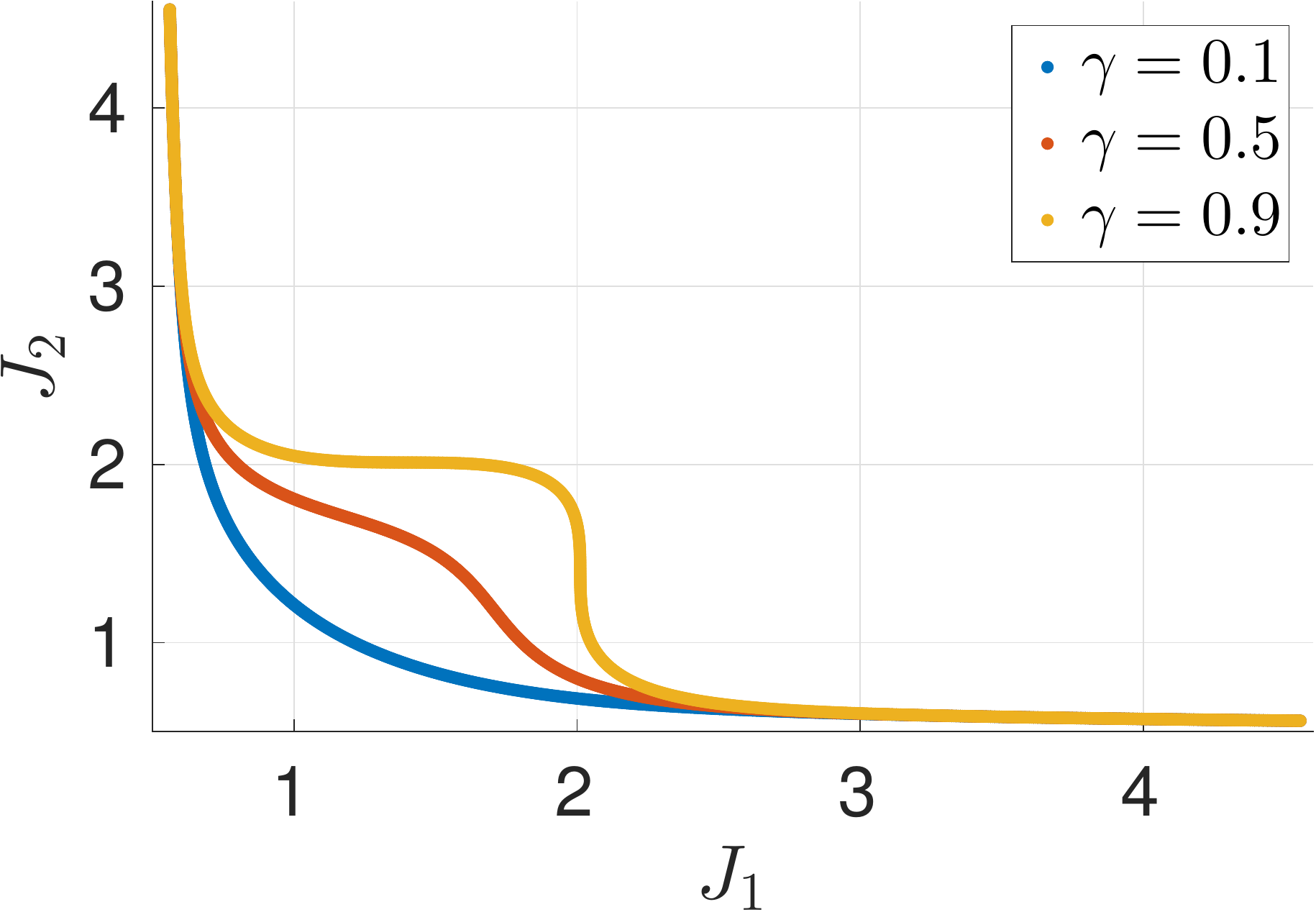} \\ (b)}
		\caption{Pareto set (a) and Pareto front (b) of Problem~\eqref{eq:Witting} for varying values of $\gamma$. Although the fronts vary, $\PS$ is invariant under translations in $\gamma$.}
		\label{fig:Witting}
	\end{figure}
\end{example}

\subsection{Numerical identification of symmetries}
\label{subsec:Sym_Numerical_Identification}

There may be problems where it is very tedious or even impossible (for example when black-box models are involved) to identify symmetries analytically. In this situation, one can use numerical approaches to verify Equation~\eqref{eq:Invariance_MOCPu} (or \eqref{eq:Invariance_MOCP0}, respectively). In the context of multiobjective optimization, this means that the Pareto set is invariant under variations of some parameter $\gamma$:
\[
	\frac{d \PS_{\gamma}}{d \gamma} = 0 \quad \Longleftrightarrow \quad \PS_{\gamma} = \PS_{\overline{\gamma}} \quad \forall ~ \gamma, \overline{\gamma} \in [\gamma_{min}, \gamma_{max}]. 
\]
Numerically, this can be realized by demanding that the Hausdorff $d_h$ distance between Pareto sets at a finite number of $n_{\mathsf{Test}}$ different parameter values is small:
\[ 
	d_h(\PS_{\gamma}, \PS_{\overline{\gamma}})\leq \epsilon \quad \forall ~ \gamma, \overline{\gamma} \in \{\gamma_1, \ldots, \gamma_{n_{\mathsf{Test}}}\}.
\]
In Figure~\ref{fig:Witting}, for instance, the Hausdorff distance between the Pareto sets for different values of $\gamma$ is zero. This concept will be exploited in the second example in Section~\ref{subsec:Examples_EV}.

\section{Explicit multiobjective MPC for nonlinear problems}
\label{sec:Algorithm}
The method that will be used in this article is in the spirit of explicit MPC, i.e., we solve a large number of MOCPs offline and store the corresponding Pareto sets in a library. By exploiting symmetries in the dynamical control system, the number of MOCPs is significantly reduced. In the online phase, we only have to select the correct Pareto set from the library and then choose a compromise solution according to the decision maker's preference. In contrast to the classical motion planning with motion primitives concept, we do not need to store the trajectories but only the optimal controls, i.e., the Pareto sets. This is due to the MPC framework which only requires control values as input to run the plant. Thus, a transformation by the symmetry group action to construct feasible trajectories as in the open loop case is not necessary anymore. First ideas concerning the MPC approach have previously appeared in \cite{PSOB+17}. Before introducing the two phases in detail in the following, we first give a quick introduction to the classical approach (introduced in \cite{BMDP02}) and some extensions.

\subsection{Relation to the single-objective case}
\label{subsec:Relation_LQSQ}
Explicit MPC was introduced by Bemporad et al.~\cite{BMDP02}. In order to avoid prohibitively large costs for solving optimal control problems in real time, they introduced an offline-online decomposition such that during operation, one only has to select the precomputed optimal control from a library. They showed that in the linear-quadratic case, the optimal control problem can be reformulated as a \emph{multi-parametric quadratic programming (mpQP)} problem. In this setting, the optimal control is an affine function of the initial condition $x_0$ such that the state-to-control mapping can be constructed via a finite number of polyhedrons covering the state space. Extensions to the nonlinear case were presented in \cite{Joh02,BF06}. Here, the polyhedrons from the linear-quadratic case are constructed via linear interpolation between nodes at which the solution has been computed, and adaptive procedures are proposed in order to satisfy prescribed error bounds.

For the linear-quadratic case, an extension to multiple objectives could be performed in a straightforward manner using the method of weighted sums, where the vector of objective functions is synthesized into one objective via convex combination using a weight vector $\rho \in [0,1]^k$ (see \cite{Ehr05} for details). Since the resulting MOCP is convex \cite{BP09a}, the weighted sum method is capable of computing all optimal compromises. Fixing the weight $\rho$ yields precisely the setting considered in \cite{BMDP02}. Hence, we can introduce a numerical grid for $\rho$ and solve an mpQP for each value. However, since we want to consider nonlinear problems here, neither the weighted sum method nor the mpQP approach are applicable. Thus, we present an alternative approach in this article. 

For both the linear and the nonlinear case, the cost of the offline phase increases exponentially with the dimension of the parameter. Hence, we will exploit symmetries in the MOCP in order to reduce the parameter dimension and thus, the number of MOCPs that we have to solve in the offline phase. The exploitation of symmetries for single-objective linear-quadratic problems has been studied in \cite{DB12}, and the relation to our approach is discussed in Example~\ref{ex:LQ}.
	
\subsection{Offline phase}
As can be seen in the problem formulation \eqref{eq:MOCP}, we consider a problem which is parametrized with respect to the initial value $x_0 \in \R^{n_x}$. In order to provide the optimal solution to \eqref{eq:MOCP} for each value of $x_0$ (and possibly additional parameters $\gamma \in \R^{n_\gamma}$, cf.~Remark~\ref{rem:Parameters}), we have to solve a large number of problems in the offline phase.
As mentioned previously, the solution is piecewise linear and continuous in the linear-quadratic case and can be computed offline for all parameter values. In the nonlinear case, this structure does no longer exist. Consequently, we would have to solve an infinite number of MOCPs in the offline phase. As a workaround, a natural idea is to discretize $(x_0, \gamma)$ on an equidistant grid and use linear interpolation for intermediate values. The grid size depends critically on the degree of nonlinearity of the system under consideration. Furthermore, the number of MOCPs increases exponentially with the state dimension $n_x$. A reduction of this dimension is therefore highly advisable. In the linear-quadratic case, this has been addressed in \cite{DB12} (cf.~Example~\ref{ex:LQ}).

\begin{algorithm} 
	\caption{Offline phase}
	Given: Lower and upper bounds $x_{0,min}, x_{0,max} \in \R^{n_x}$ and $\gamma_{min}, \gamma_{max} \in \R^{n_\gamma}$, number of grid points $\delta\in \N^{n_x + n_\gamma}$.
	\begin{algorithmic}[1]
		\State Dimension reduction: Decrease dimension of parameter $(x_0,\gamma) \in \R^{n_x + n_\gamma}$ to $(\widetilde{x}_0, \widetilde{\gamma}) \in \R^{\widetilde{n}_x + \widetilde{n}_\gamma}$ by exploiting the symmetry groups $G$ and $L$ (cf.~Section~\ref{sec:Symmetries}).
		\State Construction of library: Create an $(\widetilde{n}_x + \widetilde{n}_\gamma)$-dimensional grid $\mathcal{L}$ for the parameter $(\widetilde{x}_0, \widetilde{\gamma})$ between $(\widetilde{x}_{0,min}, \widetilde{\gamma}_{min})$ and $(\widetilde{x}_{0,max}, \widetilde{\gamma}_{max})$ with $\delta_i$ points in the $i^{\mathsf{th}}$ direction. This results in $N = \prod_{i=1}^{\widetilde{n}_x + \widetilde{n}_{\gamma}} \delta_i$ parameters.
		\State Compute the Pareto sets $\PS_{(\widetilde{x}_0, \widetilde{\gamma})}$ for all $(\widetilde{x}_0, \widetilde{\gamma}) \in \mathcal{L}$.
	\end{algorithmic}
	\label{alg:offline}
\end{algorithm}

The offline phase can now be summarized in Algorithm~\ref{alg:offline}. After identifying symmetries and thereby reducing the dimension of the parameter $(x_0,\gamma) \in \R^{n_x + n_\gamma}$ to $(\widetilde{x}_0, \widetilde{\gamma})\in \R^{\widetilde{n}_x + \widetilde{n}_\gamma}$, where
\[
	\widetilde{n}_x = n_x - \text{dim}(G) \mbox{ and } \widetilde{n}_\gamma = n_\gamma - \text{dim}(L),
\]
we create the library $\mathcal{L}$ as an equidistant multi-dimensional grid and solve an MOCP for each entry of $\mathcal{L}$. The number $N$ of MOCPs that has to be solved can still become very large. However, the solution process can be parallelized very efficiently since the $N$ problems are independent. 

\subsubsection{Automated solution of many MOCPs}
In this section, we describe the numerical procedure to automatically solve MOCPs as is required in step 3 of Algorithm~\ref{alg:offline}. The procedure is summarized in Algorithm~\ref{alg:MOCP_Auto} and visualized in Figure~\ref{fig:RP_Auto}.\footnote{The description here is limited to two objectives. The extension to more objectives can be realized in a straightforward manner. However, it should be noted that in this case, the selection of targets becomes difficult.} 
The procedure consists of solving the scalar problems (i.e., we solve a single-objective optimal control problem for each objective separately), determining target points, solving the resulting scalarized problems \eqref{eq:ReferencePoint}, and cleaning up the Pareto set by removing the \emph{long tails} of the Pareto fronts. These tails are not \emph{properly non-dominated}, i.e., they yield only very small trade-offs ($< \epsilon$), see \cite{Wit12} for details. The last step is done in order to allow for a more meaningful selection process in the online phase.
\begin{algorithm} 
	\caption{Automated solution of \eqref{eq:MOCP} with two objectives}
	Given: Number of targets $n_T \in \N$, parameter $\epsilon$ for proper efficiency, parameter $d_e$.
	\begin{algorithmic}[1]
		\State Solve the scalar problems (i.e., $\min J_1$ and $\min J_2$) individually.
		\State Distribute $n_T$ targets on the ellipse defined by the points $J^*$, $J^*_1 - (d_e, 0)^\top$ and $J^*_2 - (0, d_e)^\top$.
		\State Solve \eqref{eq:ReferencePoint} $n_T$ times for the respective targets.
		\State Remove all points which are not properly Pareto optimal.
	\end{algorithmic}
	\label{alg:MOCP_Auto}
\end{algorithm}
\begin{figure}[h!]
	\centering
	\includegraphics[width=0.25\textwidth]{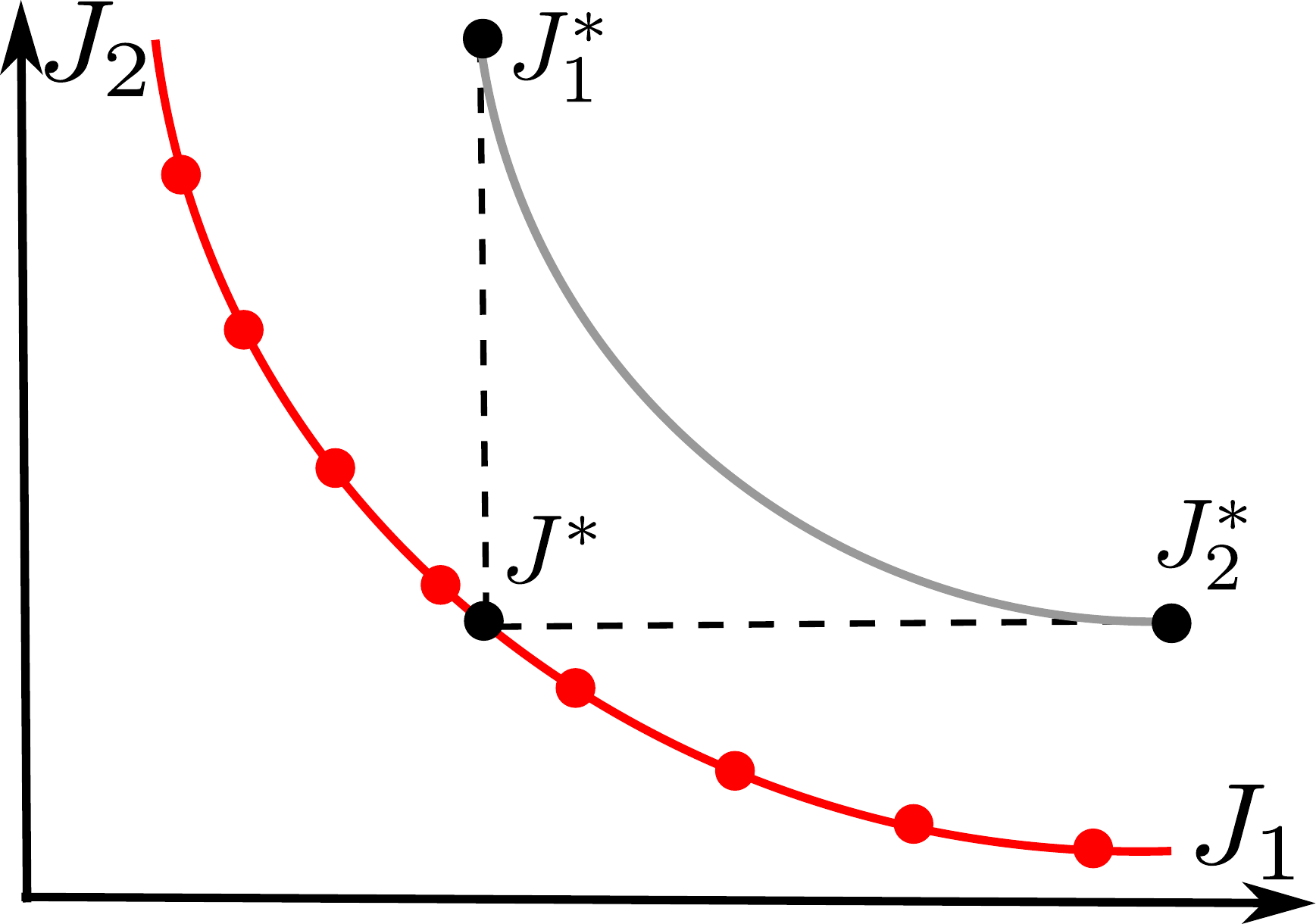}
	\caption{Procedure for automatically solving an MOCP using the reference point method. After determining the two scalar minimizers, a fixed number of reference points is distributed on an ellipse going through the utopian point $J^*$.}
	\label{fig:RP_Auto}
\end{figure}

Since we want to parallelize the computation of the Pareto sets, we solve all problems individually, i.e., without taking any knowledge about prior Pareto sets into account. 
If the solutions have to be computed without parallelization, one can alternatively exploit the fact that under certain smoothness assumptions, the Pareto set depends continuously on the parameter $(\widetilde{x}_0, \widetilde{\gamma})$ such that small variations lead to small variations in $\PS_{(\widetilde{x}_0, \widetilde{\gamma})}$. An approach exploiting this in a set-valued continuation method is presented in \cite[Section 3.3]{Pei17}.

\subsection{Online phase}
The online phase is very similar to a standard MPC approach with the important difference that the MOCP is not solved online but the pre-computed solution is obtained from the library that has been computed in the offline phase. The task is thus to identify the current values for $\widetilde{x}_0$ and $\widetilde{\gamma}$ and then select the corresponding Pareto set $\PS_{(\widetilde{x}_0, \widetilde{\gamma})}$ from the library. According to the decision maker's current preference $\rho \in [0,1]^{k}$ (with $\sum_{i=1}^{k} \rho_i= 1$), a Pareto optimal control is chosen and applied to the plant over the control horizon $t_c$. The online phase is summarized in Algorithm~\ref{alg:online}.

Since we do not know the exact solution for every $(\widetilde{x}_0, \widetilde{\gamma})$, we use linear interpolation between the neighboring entries of $\mathcal{L}$. In order to avoid the interpolation of sets, we first select an optimal compromise and then perform the interpolation. For the single-objective situation, linear interpolation has also been proposed for nonlinear problems, see \cite{Joh02,BF06}.

\begin{algorithm} 
	\caption{Online phase}
	Given: Weight $\rho \in \R^k$ with $\sum_{i=1}^{k} \rho_i = 1$ and $\rho \geq 0$.
	\begin{algorithmic}[1]
		\For{$t = t_0,t_1,t_2,\ldots$}
		\State Obtain the current initial condition $\widetilde{x}_0 = \widetilde{x}(t)$ and the parameter value $\widetilde{\gamma}$ from the plant.
		\State Identify the $2 (\widetilde{n}_x + \widetilde{n}_\gamma)$ neighboring grid points of $(\widetilde{x}_0, \widetilde{\gamma})$ in $\mathcal{L}$ (i.e., closest below and above in each component of $(\widetilde{x}_0, \widetilde{\gamma})$). These points are collected in the index set $\mathcal{I}$.
		\State From each of the corresponding Pareto sets $\PS_{(\widetilde{x}_0, \widetilde{\gamma})_i}$, $i\in\mathcal{I}$, select a Pareto optimal control $u_i$ according to the weight $\rho$.
		\State Compute the distances $d_i$ between the entries of the library and $(\widetilde{x}_0, \widetilde{\gamma})$:
			\begin{align*}
				d_i = \|(\widetilde{x}_0, \widetilde{\gamma})_i - (\widetilde{x}_0, \widetilde{\gamma})\|_2.
			\end{align*}
		\If{$\exists j \in \mathcal{I}$ with $d_j = 0$}
			\State $u = u_j$
		\Else \State
			\begin{align*}
				u = \frac{\sum_{i=1}^{|\mathcal{I}|}\frac{1}{d_i} u_i}{\sum_{i=1}^{|\mathcal{I}|}\frac{1}{d_i}}.
			\end{align*}
		\EndIf
		\State Apply $u$ to the plant for the control horizon length $t_c$. 
		\EndFor
	\end{algorithmic}
	\label{alg:online}
\end{algorithm}

\section{Examples}
\label{sec:Examples}
In this section, the \emph{explicit multiobjective MPC (EMOMPC)} method is validated using two examples from autonomous driving. This problem has raised increasing interest in the past, in particular due to the additional interest in energy efficiency for both ecological reasons and reduced ranges of electric vehicles. A survey on path tracking of autonomous vehicles using motion primitives can be found in \cite{PCY+16}, and several researchers have addressed multiobjective optimal control of vehicles as well, see, e.g., \cite{LSKV10,LLRW11}. 

Here, we will first consider the problem of maneuvering, where we want to stay as close to a reference track as possible while maximizing the driven distance. As a second example, we will revisit the electric vehicle application presented in \cite{PSOB+17}, where the longitudinal dynamics of an electric vehicle have to be controlled in a Pareto optimal manner. Here, we will also address the numerical identification of symmetries discussed in Section~\ref{subsec:Sym_Numerical_Identification}.

\subsection{Multiobjective car maneuvering}
\label{subsec:Example_Car}
As the first example, we consider driving on a race track, where we are interested in optimally determining the steering angle for a vehicle with respect to secure and fast driving. To this end, we consider the well-known \emph{bicycle model} \cite{TL90} (see also \cite{PL14}, where a more complex bicycle model has been used for single objective optimal control of formula one cars). In this model, the dynamics of the vehicle is approximated by representing the two wheels on each axis by one wheel on the centerline (cf.~Figure~\ref{fig:Car_Bicycle} (a)). When assuming a constant longitudinal velocity $v_x$, this leads to a nonlinear system of five coupled ODEs:
\begin{equation} \label{eq:ODE_Bicycle}
	\begin{aligned}
		\dot{x}(t) &= \left(\begin{array}{c}
		\dot{p_1}(t) \\
		\dot{p_2}(t) \\
		\dot{\varTheta}(t) \\
		\dot{v_y}(t) \\
		\dot{r}(t) 
		\end{array}\right) =
		\left(\begin{array}{c}
		v_x(t) \cos(\varTheta(t)) - v_y(t) \sin(\varTheta(t)) \\
		v_x(t) \sin(\varTheta(t)) + v_y(t) \cos(\varTheta(t)) \\
		r \\
		C_1(t) v_y(t) + C_2(t) r(t) + C_3(t) u(t), \\
		C_4(t) v_y(t) + C_5(t) r(t) + C_6(t) u(t)
		\end{array}\right),\ t \in (t_0, t_e], \\
		x(t_0) &= x_0,
	\end{aligned}
\end{equation}
where $x = (p_1,p_2,\varTheta, v_y, r)^\top$ is the state consisting of the position $p = (p_1, p_2)$, the angle $\varTheta$ between the horizontal axis and the longitudinal vehicle axis, the lateral velocity $v_y$ and the yaw rate $r$ (cf.~Figure~\ref{fig:Car_Bicycle} (a)). The vehicle is controlled by the front wheel angle $u$ and the variables
\begin{equation*}
	\begin{aligned}
		C_1(t) &= -\frac{C_{\alpha,f} \cos(u(t)) + C_{\alpha,r}}{m v_x(t)}, \\
		C_3(t) &= \frac{C_{\alpha,f} \cos(u(t))}{m},\\
		C_5(t) &= - \frac{L_f^2 C_{\alpha,f} \cos(u(t)) + L_r^2 C_{\alpha,r}}{I_z v_x(t)},
	\end{aligned} \qquad
	\begin{aligned}
		C_2(t) &= \frac{-L_f C_{\alpha,f} \cos(u(t)) + L_r C_{\alpha,r}}{I_z v_x(t)}, \\
		C_4(t) &= \frac{-L_f C_{\alpha,f} \cos(u(t)) + L_r C_{\alpha,r}}{m v_x(t)} -v_x(t),\\
		C_6(t) &= \frac{L_f C_{\alpha,f} \cos(u(t))}{I_z},
	\end{aligned}
\end{equation*}
have been introduced for abbreviation. The constants therein describe the vehicle's geometry, mass as well as tyre properties, see Table~\ref{tab:Bicycle_Constants}.
\begin{table}[t]
	\caption{Physical constants of the vehicle model.}
	\centering
	\begin{tabular}{ccc}
		\hline
		Variable & Physical property & Numerical value \\ 
		\hline 
		$C_{\alpha,f}$ & Cornering stiffness coefficient (front) & $65100$ \\ 
		$C_{\alpha,r}$ & Cornering stiffness coefficient (rear) & $54100$\\ 
		$L_f$ & Distance front wheel to center of mass & $1$ \\ 
		$L_r$ & Distance rear wheel to center of mass & $1.45$ \\ 
		$m$ & Vehicle mass & $1275$ \\ 
		$I_z$ & Moment of inertia & $1627$ \\ 
		\hline 
	\end{tabular}
	\label{tab:Bicycle_Constants}
\end{table}

\begin{figure}[t!]
	\centering
	\parbox[b]{0.25\textwidth}{\centering \includegraphics[width=0.25\textwidth]{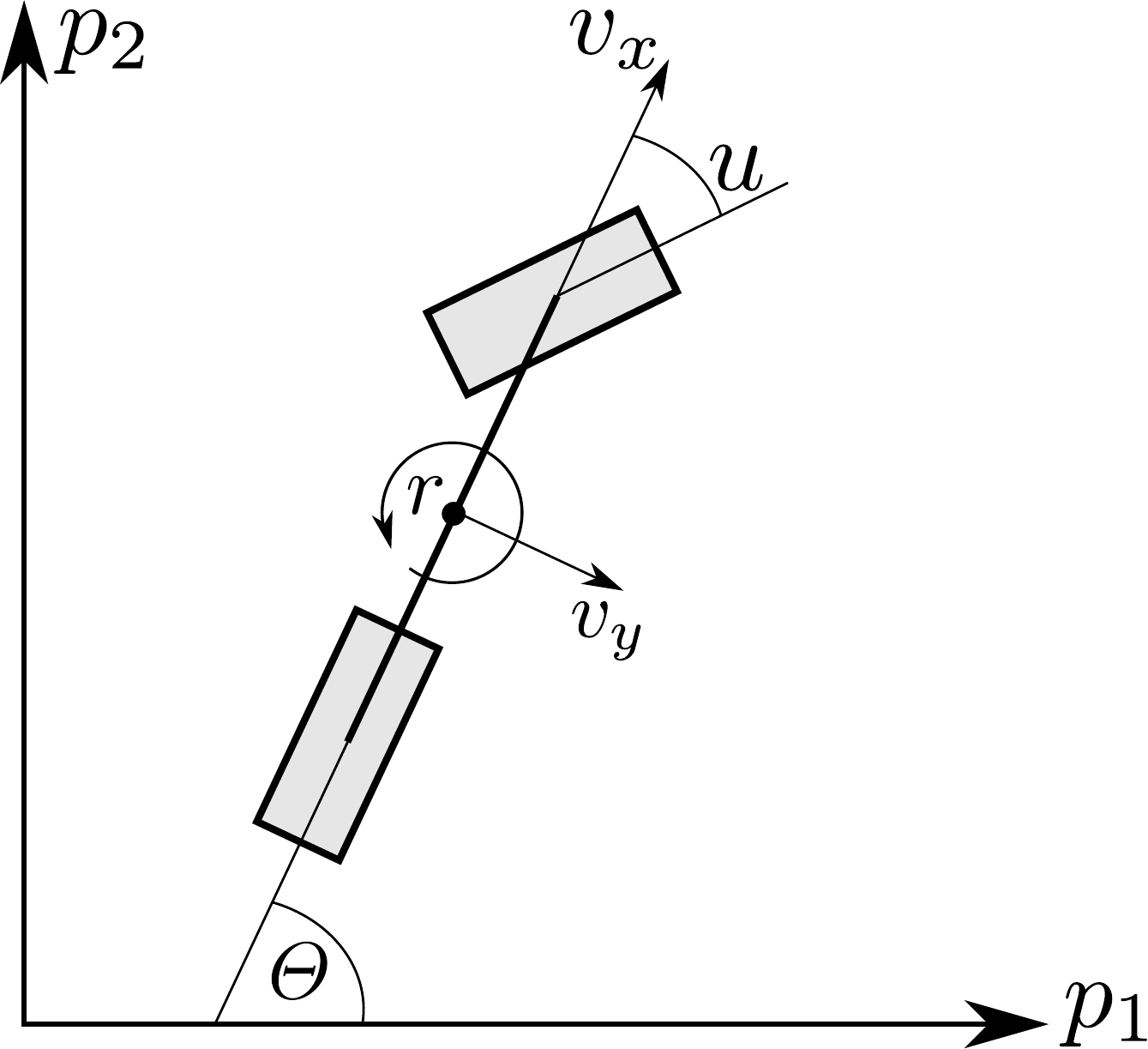} \\ (a)}\hfil
	\parbox[b]{0.42\textwidth}{\centering \includegraphics[width=0.42\textwidth]{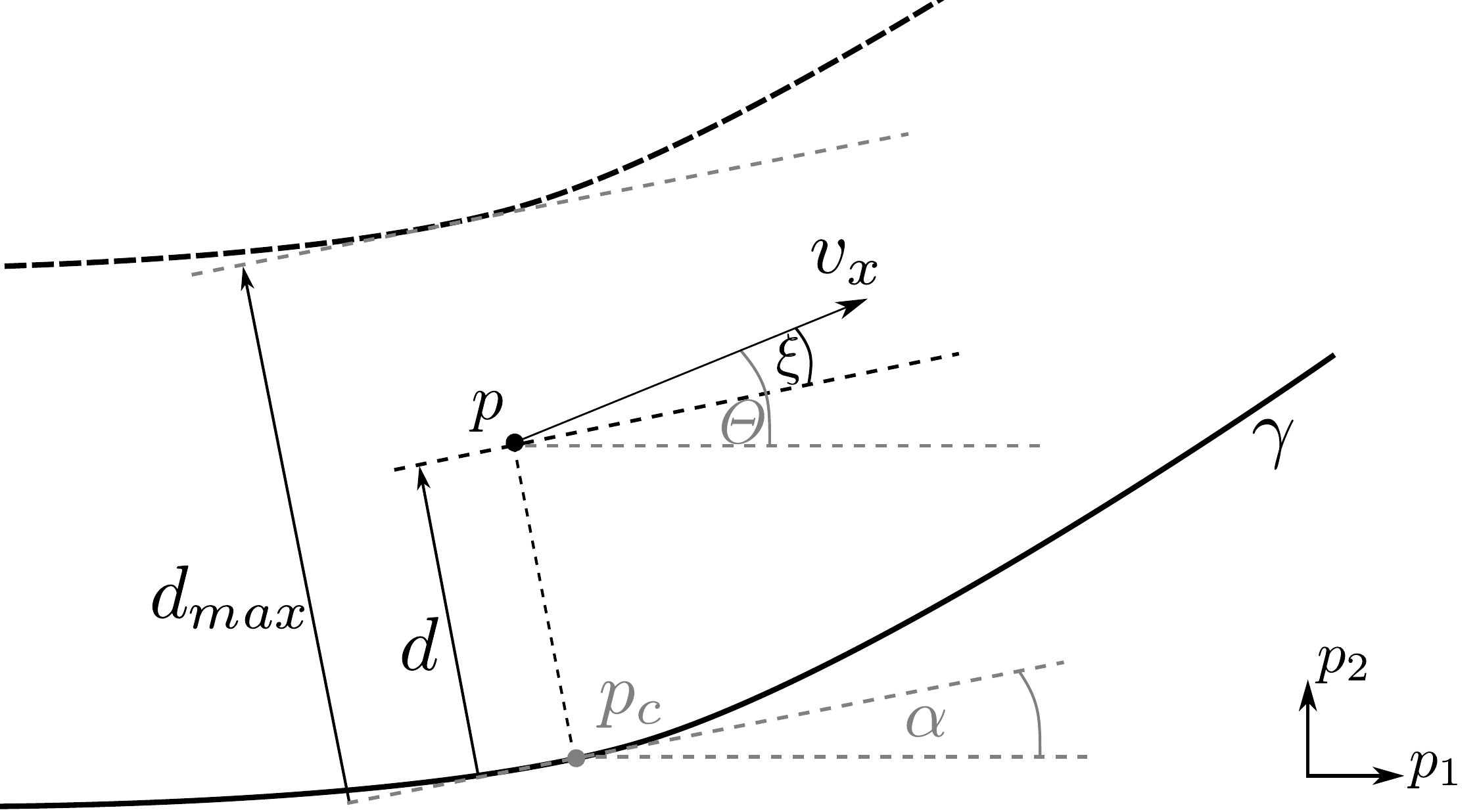} \\ (b)}
	\caption{(a) Bicycle model for the approximation of the vehicle dynamics. (b) Coordinates relative to the track center line linearized in $p_c$.}
	\label{fig:Car_Bicycle}
\end{figure}

We now want to control the vehicle such that it follows a given track $\gamma$ (e.g., a race track such as in \cite{PL14}) both securely and fast. 
In order to apply our EMOMPC algorithm, we additionally have to take the track $\gamma$ (which is now an additional parameter) into account.
Using these quantities, the first objective (i.e., security) is measured via the distance to the center line and the second objective (i.e., fast driving) is calculated via the driven distance along the track. For both objectives, we use projections of the vehicle onto the centerline:
\begin{equation}\label{eq:proj}
	\Pi_{\gamma}(p(t)) = \arg \min_{p_c \in \gamma} \|p(t) - p_c\|_2,
\end{equation}
and the corresponding distance is defined as
\begin{equation*}
	d(t) = \min_{p_c \in \gamma} \|p(t) - p_c\|_2 = \|p(t) - \Pi_{\gamma}(p(t)) \|_2.
\end{equation*}
In order to evaluate the second objective, we cannot simply use the driven distance of the vehicle due to the constant velocity $v_x$. Instead, we are interested in the driven distance with respect to the center line:
\begin{equation*}
	\int_{\Pi_{\gamma}(p(t_0))}^{\Pi_{\gamma}(p(t_e))}1~ds,
\end{equation*}
which is the standard curve integral $\int_{\gamma}f(s)~ds$ with a constant function. 
Using the above definitions, we obtain the following parameter-dependent MOCP:
\begin{equation*}
	\begin{aligned}
		\min_{x\in \mathcal{X}, u \in \SetU} J(x,u,\gamma) &= \min_{u \in \SetU} \left(\begin{array}{c}
		\int_{t_0}^{t_e} d(t)^2~dt \\
		-\int_{\Pi_{\gamma}(p(t_0))}^{\Pi_{\gamma}(p(t_e))}1~ds
		\end{array}\right) \\
		\mbox{s.t.}\qquad\qquad\qquad&\mbox{Dynamics}~\eqref{eq:ODE_Bicycle},\qquad\qquad\qquad\qquad\qquad~\\
		&\quad d \leq d_{max},
	\end{aligned}
\end{equation*}
which can be reformulated according to our framework as
\begin{equation}\label{eq:MOCP_bicycle}
	\begin{aligned}
		\min_{u \in \SetU} \hat{J}(x_0,u,\gamma) &= \min_{u \in \SetU} \left(\begin{array}{c}
		\int_{t_0}^{t_e} d(t)^2~dt \\
		-\int_{\Pi_{\gamma}(p(t_0))}^{\Pi_{\gamma}(p(t_e))}1~ds
		\end{array}\right) \\
		d &\leq d_{max}.
	\end{aligned}
\end{equation}

We now identify the symmetry action under which Problem \eqref{eq:MOCP_bicycle} is invariant, i.e., under which the objectives, the dynamics and the constraint are invariant (according to Theorem~\ref{prop:symMOCP}). The following proposition states that we can shift and rotate the ``entire problem setup''.

\begin{proposition}\label{prop:invariances_bicycle}
	Problem \eqref{eq:MOCP_bicycle} with a given reference track $\gamma$ is invariant under the group action $(\psi_g,\xi_l)$ defined by 
	\begin{equation}
	\psi_g(x) = Q\cdot x + \Delta x,\quad \xi_l(\gamma) = R_{\Delta \varTheta} \cdot \gamma + \Delta p
	\end{equation}
	with $Q = \left(\begin{matrix} R_{\Delta \varTheta} & \mathbf{0}_{2\times 3} \\  \mathbf{0}_{3\times 2} & I_{3\times 3} \end{matrix}\right)$, $R_{\Delta \varTheta} = \left(\begin{matrix} \cos{\Delta\varTheta} & -\sin{\Delta\varTheta} \\ \sin{\Delta\varTheta}&\cos{\Delta\varTheta} \end{matrix}\right)$, $\Delta x = (\Delta p_1, \Delta p_2, \Delta \varTheta, \mathbf{0}_{2,1})^{\top}$ and $\Delta p = (\Delta p_1, \Delta p_2)^{\top}$. This action represents identical translations in the vehicle's position $p$ and track $\gamma$ and simultaneous translation in orientation and rotation of the position vector for both the vehicle and the track.
	The corresponding symmetry group is thus given by $G=L=SE(2)$ with $\text{dim}(G)=\text{dim}(L)=3$.
\end{proposition}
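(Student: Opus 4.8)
The plan is to verify the three hypotheses of Theorem~\ref{prop:symMOCP}, in the parameter-dependent form of Remark~\ref{rem:Parameters} with trivial control action $\chi_h = \mathrm{id}$ (the control $u$, being the front-wheel angle, is not transformed). Since $\psi_g$ is affine with linear part $Q$, its tangent lift is $D_x\psi_g = Q$, so the dynamical invariance \eqref{eq:Invariance} may be checked in the equivalent equivariance form \eqref{eq:equivariance}, i.e.\ $f(\psi_g(x),u) = Q\,f(x,u)$ for the right-hand side of \eqref{eq:ODE_Bicycle}.

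First I would check the dynamics. Since $Q$ is block-diagonal with $R_{\Delta\varTheta}$ acting on $(p_1,p_2)$ and the identity on $(\varTheta,v_y,r)$, and since the action fixes $v_y$ and $r$ while shifting $\varTheta \mapsto \varTheta + \Delta\varTheta$, the last three components are immediate: $\dot\varTheta = r$ is unchanged, and the coefficients $C_1,\dots,C_6$ depend only on $u$ and $v_x$ --- not on $p$ or $\varTheta$ --- so the equations for $\dot v_y$ and $\dot r$ map to themselves, matching the identity block of $Q$. For the position components I would substitute $\varTheta \mapsto \varTheta+\Delta\varTheta$ and use the angle-addition identities to show that $(f_1,f_2)(\psi_g(x)) = R_{\Delta\varTheta}\,(f_1,f_2)(x)$; this is just the statement that computing the planar velocity at a rotated heading coincides with rotating the velocity vector.

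Next I would verify invariance of the two objectives, taking $\alpha = 1$ and $\beta = \delta = 0$ so that the common-$\alpha$ requirement of Condition~2 holds. For the first objective, because both the position $p$ and the track $\gamma$ are rotated by the same $R_{\Delta\varTheta}$ and translated by the same $\Delta p$, for every $p_c\in\gamma$ one has $\|\psi_g(p)-\xi_l(p_c)\|_2 = \|R_{\Delta\varTheta}(p-p_c)\|_2 = \|p-p_c\|_2$ by orthogonality of $R_{\Delta\varTheta}$; hence $d(t)$, and therefore the running cost $d(t)^2$, is invariant. For the second objective I would first establish that the projection \eqref{eq:proj} is equivariant, $\Pi_{\xi_l(\gamma)}(\psi_g(p)) = \xi_l(\Pi_\gamma(p))$, which follows from the same distance-preservation identity, and then observe that the center-line arc length between the two endpoint projections is unchanged because $\xi_l$ is a rigid motion and rigid motions are isometries of $\R^2$. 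The constraint $d \le d_{max}$ is then invariant as an immediate consequence of the invariance of $d$, giving Condition~3.

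With all three conditions in hand, Theorem~\ref{prop:symMOCP} (via Remark~\ref{rem:Parameters}) yields \eqref{eq:Invariance_MOCP} for the combined action, proving invariance of \eqref{eq:MOCP_bicycle}; the group is $SE(2)$ because $(\psi_g,\xi_l)$ is built from a planar rotation in $SO(2)$ and a translation in $\R^2$, whence $\mathrm{dim}(G)=\mathrm{dim}(L)=1+2=3$. I expect the main obstacle to be the second objective: unlike the distance term, its invariance is not a one-line orthogonality computation but requires the equivariance of the nearest-point projection $\Pi_\gamma$ (which presupposes that the minimizer in \eqref{eq:proj} is well defined, e.g.\ unique a.e.\ along the trajectory) together with the isometry-invariance of the arc-length integral along the transformed track.
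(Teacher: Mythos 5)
Your proposal is correct and follows essentially the same route as the paper: verify equivariance of the bicycle vector field componentwise (the identity block for $(\varTheta,v_y,r)$ since $C_1,\dots,C_6$ are independent of position and heading, and angle-addition for the planar velocity), establish invariance of $d$ by orthogonality of $R_{\Delta\varTheta}$, prove equivariance of the projection $\Pi_\gamma$ as the key lemma for the second objective, and conclude via Theorem~\ref{prop:symMOCP}. The only cosmetic difference is that the paper carries out the arc-length invariance explicitly through a parametrization $\tau\mapsto\gamma(\tau)$ with $\|\tilde{\gamma}'(\tau)\|_2=\|R\gamma'(\tau)\|_2=\|\gamma'(\tau)\|_2$, whereas you appeal directly to rigid motions being isometries; your remark about well-definedness of the nearest-point minimizer is a fair caveat that the paper also leaves implicit.
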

\begin{proof}
	\underline{Equivariance of vector field:}
	With $f$ given by the right hand side of \eqref{eq:ODE_Bicycle}, we have
	\begin{align*}f(\psi_g(x),u) &= \left(\begin{array}{c}
		v_x \cos(\varTheta + \Delta \varTheta) - v_y \sin(\varTheta+ \Delta \varTheta) \\
		v_x \sin(\varTheta+ \Delta \varTheta) + v_y \cos(\varTheta+ \Delta \varTheta) \\
		r \\
		C_1 v_y + C_2 r + C_3 u, \\
		C_4 v_y + C_5 r + C_6 u
		\end{array}\right) \\ &= \left(\begin{array}{c}
		\cos{\Delta \varTheta}( v_x \cos{\varTheta} - v_y \sin{\varTheta}) - \sin {\Delta \varTheta} ( v_x \sin{\varTheta} + v_y \cos{\varTheta}) \\
		\sin{\Delta \varTheta}( v_x \cos{\varTheta} - v_y \sin{\varTheta}) + \cos {\Delta \varTheta} ( v_x \sin{\varTheta} + v_y \cos{\varTheta}) \\
		r \\
		C_1 v_y + C_2 r + C_3 u, \\
		C_4 v_y + C_5 r + C_6 u
		\end{array}\right) \\
		&= Q \cdot f(x,u) = D_x\psi_g(f(x,u)).
	\end{align*}
	\underline{Invariance of cost functions and constraints:}
	Let us assume that the curve $\gamma$ is parametrized by the paramter $\tau\in [a,b]\subset\R$, i.e., $\gamma: [a,b] \rightarrow \R^2, \tau \mapsto \gamma(\tau)$. We can then reformulate the minimal distance $d(t)$ between positon $p(t)$ and the track $\gamma$ as
	\[
		d(t) = \min_{p_c \in \gamma} \| p(t)-p_c\|_2 = \min_{\tau\in [a,b]} \| p(t)-\gamma(\tau)\|_2
	\]
	If we apply the group action $(\psi_g,\xi_l)$ to $x$ and $\gamma$ and set $\tilde{x} = \psi_g(x)$ and $\tilde{\gamma} = \xi_l(\gamma)$, we obtain
	\begin{equation}\label{eq:inv_d}
		\tilde{d}(t) := \min_{p_c \in \tilde{\gamma}} \| \tilde{p}(t) -p_c\|_2 = \min_{\tau\in [a,b]} \| R \cdot p(t) + \Delta p - (R \cdot \gamma(\tau) + \Delta p)\|_2 = \min_{\tau\in [a,b]} \| p(t)-\gamma(\tau)\|_2 = d(t)
	\end{equation}
	due to orthogonality of $R$. This means that the minimal distance between the vehicle's position and the track is invariant under the Lie group action $(\psi_g,\xi_l)$ and thus, the first objective function $\int_{t_0}^{t_e} d(t)^2\, dt$ shares the same invariance property. To show the invariance of the second objective function, let us define $\tau^* := \arg \min_{\tau \in [a,b]} \| p-\gamma(\tau)\|_2$ such that $\gamma(\tau^*) = \Pi_\gamma(p)$. For the projection in \eqref{eq:proj} applied to the $\tilde{x}$ and $\tilde{\gamma}$, it follows
	\begin{align*}
		\Pi_{\tilde{\gamma}}(\tilde{p}) &= \arg \min_{p_c\in \tilde{\gamma}} \| \tilde{p} - p_c \|_2 = \tilde{\gamma} (\arg \min_{\tau\in [a,b]} \| R p+ \Delta p - (R \gamma(\tau) + \Delta p) \|_2)\\
		& \stackrel{(*)}{=}  \tilde{\gamma} (\arg \min_{\tau\in [a,b]} \| p - \gamma(\tau)  \|_2) = \tilde{\gamma}(\tau^*) = R \gamma(\tau^*) + \Delta p = R \Pi_\gamma(p) + \Delta p,
	\end{align*}
	i.e., the projection of the symmetry transformed problem is the symmety transformed projection of the original problem. 
	Let us define $\tau^*_0 = \arg \min_{\tau \in [a,b]} \| \tilde{p}(t_0)-\tilde{\gamma}(\tau) \|_2$ and $\tau^*_e = \arg \min_{\tau \in [a,b]} \| \tilde{p}(t_e)-\tilde{\gamma}(\tau) \|_2$. From the equality $(*)$, it can easily be seen that $\tau_0^*$ and $\tau_e ^*$ are also the minimizers of the original projection problem, i.e.~$\tau^*_0 = \arg \min_{\tau \in [a,b]} \| {p}(t_0)-{\gamma}(\tau) \|_2$ and $\tau^*_e = \arg \min_{\tau \in [a,b]} \| {p}(t_e)-{\gamma}(\tau) \|_2$. Furthermore, we observe that $\tilde{\gamma}'(\tau)  = \frac{d}{d\tau} \tilde{\gamma}(\tau) = \frac{d}{d\tau} R {\gamma}(\tau) + \Delta p = R \gamma'(\tau)$.
	Application of the Lie group action $(\psi_g,\xi_l)$ to the second objective function then yields
	\begin{align*}
		\int_{\Pi_{\tilde{\gamma}}(\tilde{p}(t_0))}^{\Pi_{\tilde{\gamma}}(\tilde{p}(t_e))} 1 \ ds &= \int_{\tilde{\gamma}(\tau_0^*)}^{\tilde{\gamma}(\tau_e^*)} 1 \ ds = \int_{\tau^*_0}^{\tau^*_e} \| \tilde{\gamma}'(\tau)\|_2 \ d\tau =  \int_{\tau^*_0}^{\tau^*_e} \| R {\gamma}'(\tau)\|_2 \ d\tau = \int_{\tau^*_0}^{\tau^*_e} \|  {\gamma}'(\tau)\|_2 \ d\tau \\ &= \int_{\Pi_{{\gamma}}({p}(t_0))}^{\Pi_{{\gamma}}({p}(t_e))} 1 \ ds,
	\end{align*}
	which shows the invariance of the second objective function. Finally, due to the invaraince of $d(t)$ (Equation \eqref{eq:inv_d}), the constraint $d \leq d_{max}$ is invariant under the Lie group action as well. The statement follows with Theorem~\ref{prop:symMOCP} and Remark~\ref{remark:equif}.
\end{proof}

\begin{remark}[Numerical approximation of $\gamma$]\label{rem:parametrization_track}
	Proposition~\ref{prop:invariances_bicycle} allows us to reduce both $n_x$ and $n_\gamma$ by three since $\mbox{dim}(G) = \mbox{dim}(L) = 3$. However, $\gamma$ is infinite-dimensional for arbitrary tracks. Therefore, we will use a local approximation for $\gamma$ in the numerical realization. To this end, we approximate the track on the prediction horizon by fixing the curvature $\kappa = \frac{d \alpha}{ds} \big|_{p_c}$ with $\alpha(s=0)$ being the angle between the track and the horizontal axis in $p_c$, cf.~Figure~\ref{fig:Car_Bicycle}~(b):
	\[
		\gamma(s) = \left(\begin{array}{c}
		c_1 \\ c_2
		\end{array}\right) + \frac{1}{\kappa} \left(\begin{array}{c}
		\cos(s - c_3) \\ \sin(s - c_3)
		\end{array}\right).
	\]
	Here, $c \in \R^3$ is determined in such a way that $\gamma(0) = p_c$ and $\frac{d \gamma}{ds} \big|_{s=0} = \alpha(0)$. 
	The symmetry group $L$ allows us to reduce the dimension of the parametrization by three, i.e., we can shift and translate $\gamma$ in such a way that $\gamma(0) = (0,0)$ and $\alpha(0) = 0$, which results in the following explicit formulation:
	\[
		\gamma(s) = \frac{1}{\kappa} \left(\begin{array}{c}
		\cos(s - \pi/2) \\ 1+ \sin(s - \pi/2)
		\end{array}\right).
	\]
\end{remark}

The consequence of Proposition~\ref{prop:invariances_bicycle} and Remark~\ref{rem:parametrization_track} is a significant reduction in the dimension of the parametrization of the offline phase. From the dynamics, there only remain $v_y$ and $r$. The invariances with respect to the track result in $\kappa$ as the only parameter. However, as we have only invariance with respect to identical translation and rotation of track and vehicle, we still have to take the relative position and orientation into account, i.e., the distance $d$ and the angle $\xi$ between track and vehicle direction (cf.~Figure~\ref{fig:Car_Bicycle}~(b)). In total, this results in a five-dimensional parametrization:
\[
	(\widetilde{x}_0, \widetilde{\gamma}) = (v_y, r, \xi, d, \kappa)^\top,
\]
and the initial orientation and position for the vehicle dynamics become:
\[
	\varTheta(t_0) = \xi, \quad p(t_0) = (0, d)~\mbox{with}~ \Pi_\gamma(p(t_0)) = (0,0).
\]
Regarding the parametrized track (i.e., $p_c$, $\alpha$, $\kappa$), this is a reduction from nine to five parameters.

\begin{remark}
	Although it does not fit into the Lie group setting, we can further reduce the computational effort by a factor of two using the observation that
	\[
		\arg \min_{u} \hat{J}(x_0,u,\gamma) = -\arg \min_{u} \hat{J}(-x_0,u,-\gamma),
	\]
	which corresponds to a reflection at the horizontal axis. Consequently, we only need to consider deviations between vehicle and center line to one side (e.g., to the left side).
\end{remark}

\subsubsection{Offline phase}

In the offline phase, we first construct the library $\mathcal{L}$ and then solve Problem~\eqref{eq:MOCP_bicycle} for each entry. To this end, we implement a direct approach and discretize the problem such that we obtain a nonlinear MOP. We choose $v_x = 30$, $t_0 = 0~\mathsf{sec}$, $t_e = 0.5~\mathsf{sec}$, and a time step of $h = 0.05~\mathsf{sec}$. Consequently, we have $u\in[u_{min},u_{max}]^{10}$, where $u_{min} = -0.5$ and $u_{max} = 0.5$.
The library $\mathcal{L}$ is built according to the bounds and step sizes in Table~\ref{tab:Library}, which leads to a total number of $223,587$ MOCPs that we have to solve according to Algorithm~\ref{alg:MOCP_Auto}. The number of targets is set to $n_T = 18$ such that each Pareto front is approximated by $20$ points. Note that without exploiting the symmetry, the number of MOCPs would be four to five orders larger, which would lead to a prohibitively large CPU cost. The present number is still very high, but the solution is obtained in approximately two days when running the computation in parallel on $72$ cores, which is acceptable since the offline phase has to be performed only once. 

\begin{table}[t]
\caption{Parameters for the library $\mathcal{L}$.}
\label{tab:Library}
\centering
\begin{tabular}{ccccc}
	\hline
	Variable & Minimal value & Maximal value & Step size & Number of grid points \\ 
	\hline
	$v_y$ & $-3$ & $3$ & $0.5$ & $13$ \\ 
	$r$ & $-6$ & $6$ & $1$ & $13$ \\ 
	$\xi$ & $0$ & $10$ & $0.5$ & $21$ \\ 
	$d$ & $-\pi/4$ & $\pi/4$ & $\pi/12$ & $7$ \\ 
	$\kappa$ & $-0.1$ & $0.1$ & $0.025$ & $9$ \\ 
	\hline
\end{tabular}
\end{table}

The solution of one such MOCP is shown in Figure~\ref{fig:Bicycle_Offline}, where the resulting vehicle trajectories are depicted in (a) and the corresponding Pareto front in (b). We see that the front is non-convex such that a simple solution method such as weighted sum would not be capable of computing all Pareto optimal solutions.

\begin{figure}[h!]
	\centering
	\parbox[b]{0.45\textwidth}{\centering \includegraphics[width=0.37\textwidth]{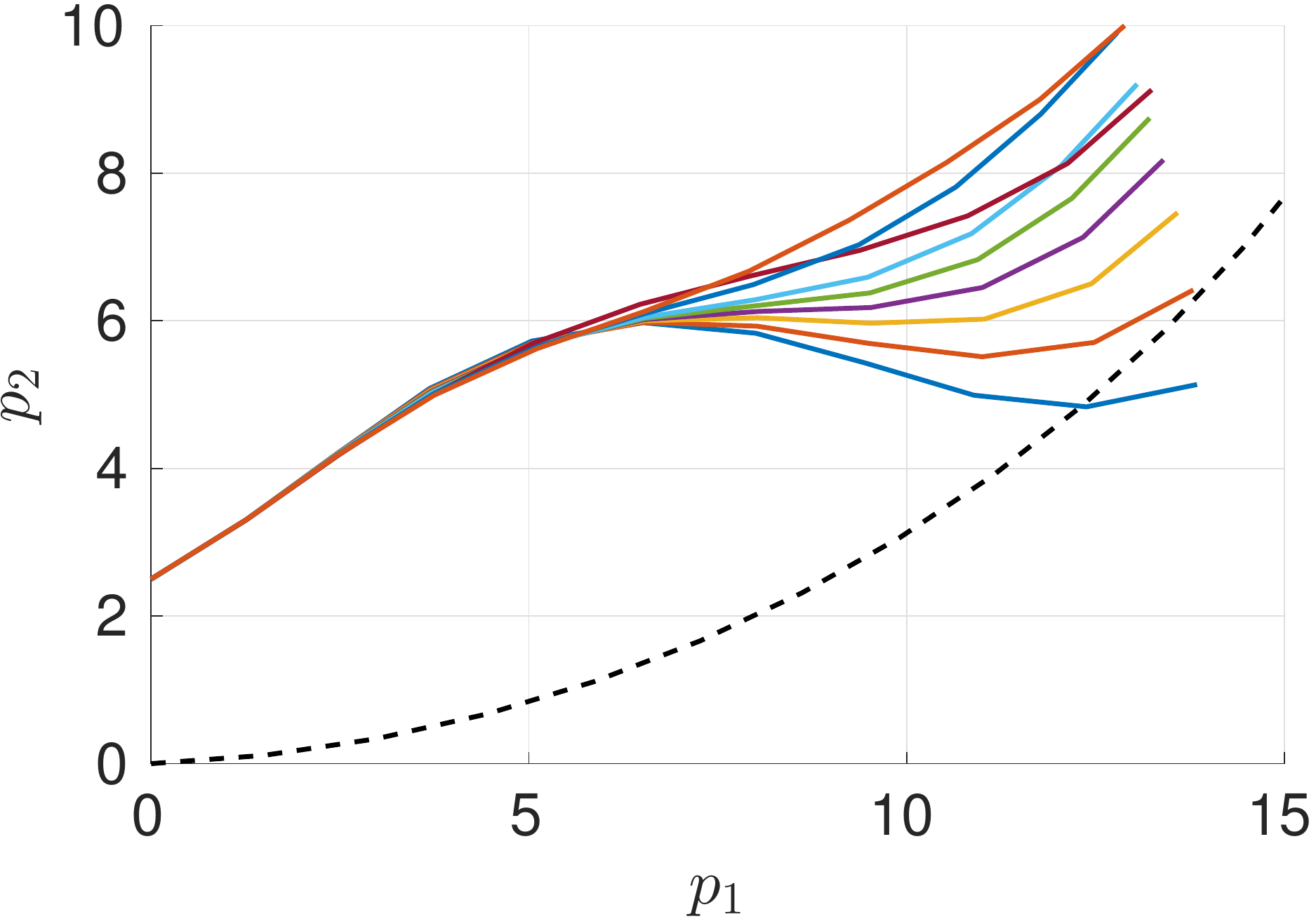} \\ (a)}\hfil
	\parbox[b]{0.45\textwidth}{\centering \includegraphics[width=0.37\textwidth]{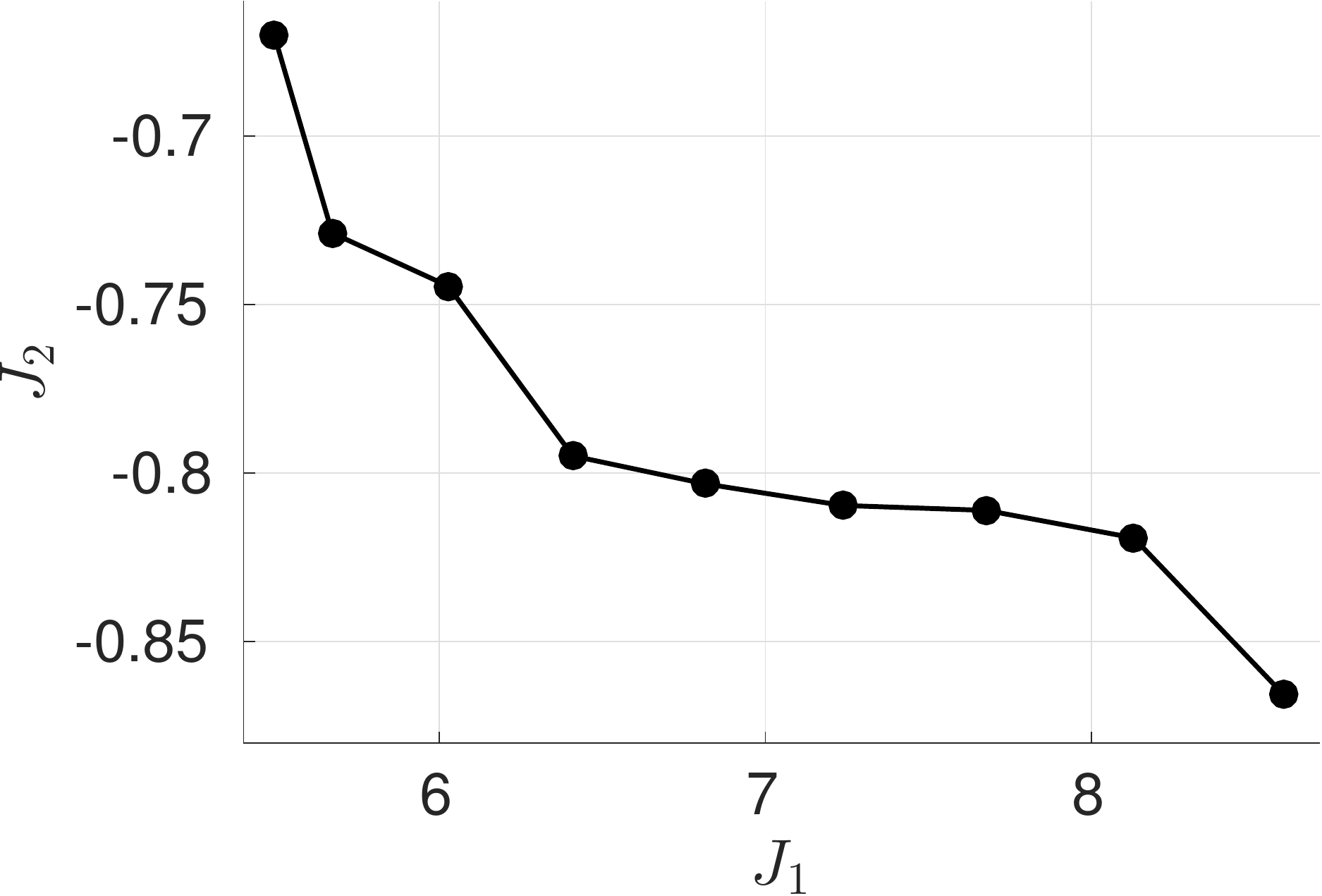} \\ (b)}
	\caption{MOCP for the parameter value $(\widetilde{x}_0, \widetilde{\gamma}) = (0, 1, \pi/6, 2.5, 0.05)^\top$. (a) The resulting Pareto optimal trajectories, where $\rho$ is increasing from blue to orange. The reference track is shown in black. (b) The corresponding Pareto front.}
	\label{fig:Bicycle_Offline}
\end{figure}

\subsubsection{Online phase}
In the online phase, the MPC loop is realized according to Algorithm~\ref{alg:online}. We test our EMOMPC framework on two test tracks with different curvature values. They can be seen in Figure~\ref{fig:Bicycle_Tracks}, where the second track is a scaled version of the first one shown in (a). We select $t_c = h$ as the control horizon and formulate global objectives that are meaningful with respect to driving one lap. Consequently, the objective of driving as far as possible is transformed to driving one lap as fast as possible. The second objective remains unchanged, i.e., we want to drive as close to the centerline as possible.

We begin by fixing the weight $\rho$ for the entire track, where the solutions for $\rho = 1$ (fast) and $\rho = 0.25$ (close to the center line) are shown in Figure~\ref{fig:Bicycle_Tracks}~(a) and (b). In Figure~\ref{fig:Bicycle_ParetoFronts}~(a), the resulting lap time and integrated distance to the center line are shown (i.e., the two objectives fast versus safe driving but for the entire track). As can be seen, the typical trade-off behavior between the objectives is carried over from the offline phase to the entire track, which yields precisely the desired additional control freedom for which the multiobjective setup is introduced in the first place. Nevertheless, we also see that very low weights (i.e., close to the center line) even lead to an increase in the driven distance. The reason is very likely that there is no regularization term in the first objective which penalizes the control cost. In combination with the error that is introduced by the discretized library and the resulting interpolation, we observe a zig zag behavior around the center line which results in increased distances and lap times. The effect can also be seen in Figure~\ref{fig:Bicycle_Offline}~(a), where we have a crossing of the center line for $\rho = 0$. For the track with higher curvature, this effect is even more apparent. Consequently, it is advisable to restrict the choice of $\rho$ to a subset of $[0,1]$. Moreover, the implementation of an adaptive library construction similar to \cite{Joh02,BF06} is a promising direction for future work.

\begin{figure}[t]
\centering
\parbox[b]{0.45\textwidth}{\centering \includegraphics[width=0.4\textwidth]{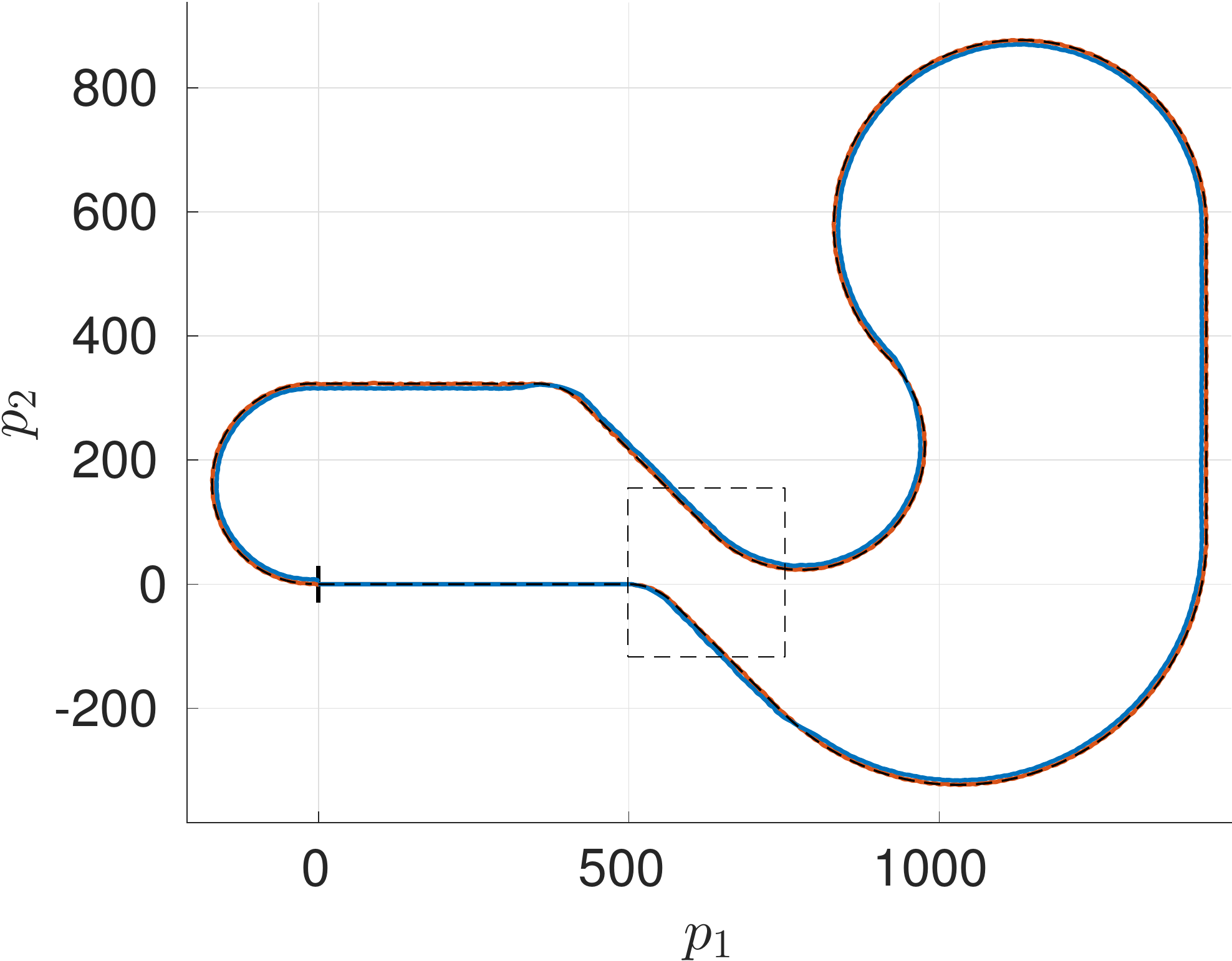} \\ (a)}\hfil
\parbox[b]{0.45\textwidth}{\centering \includegraphics[width=0.4\textwidth]{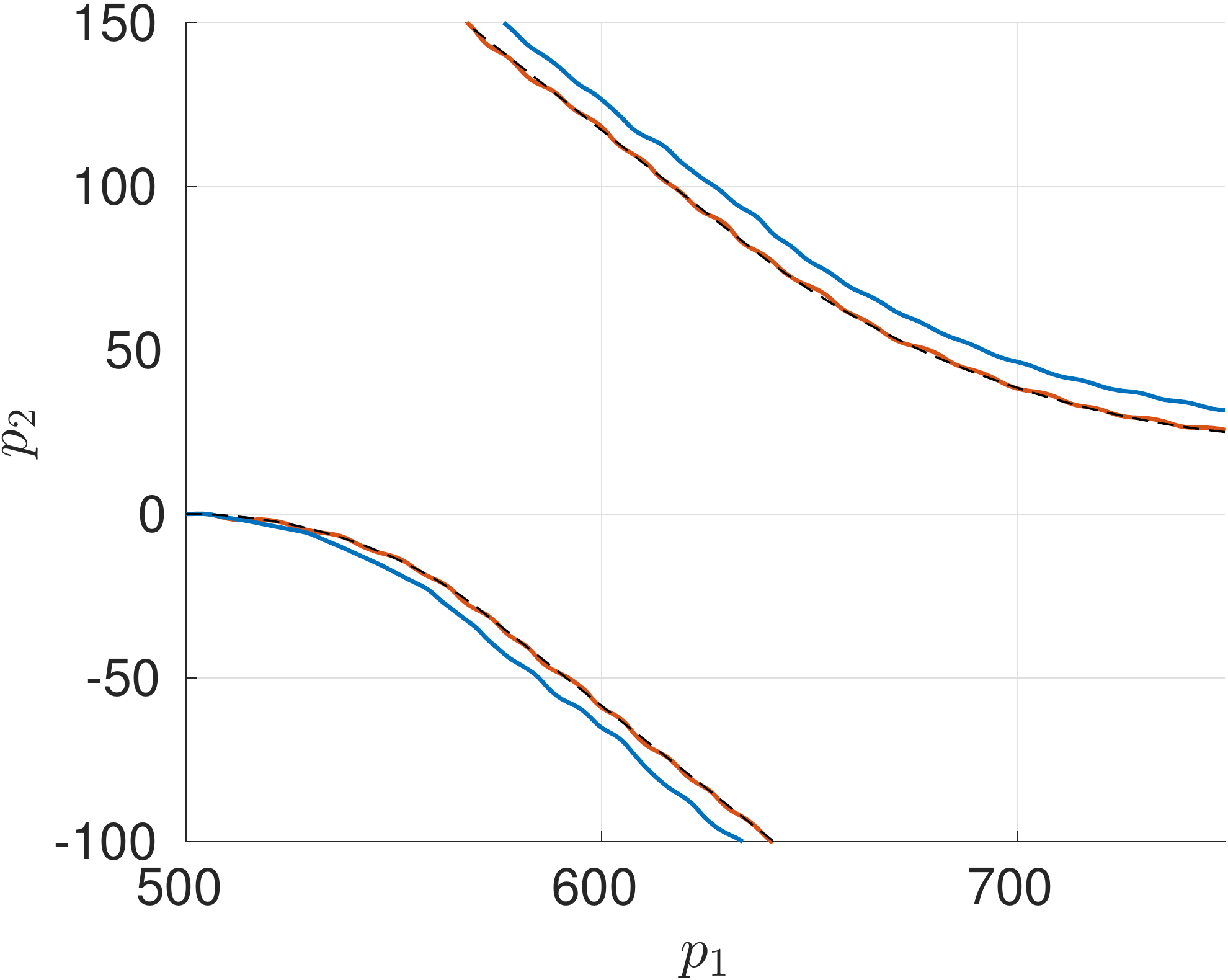} \\ (b)} \\
\parbox[b]{0.45\textwidth}{\centering \includegraphics[width=0.4\textwidth]{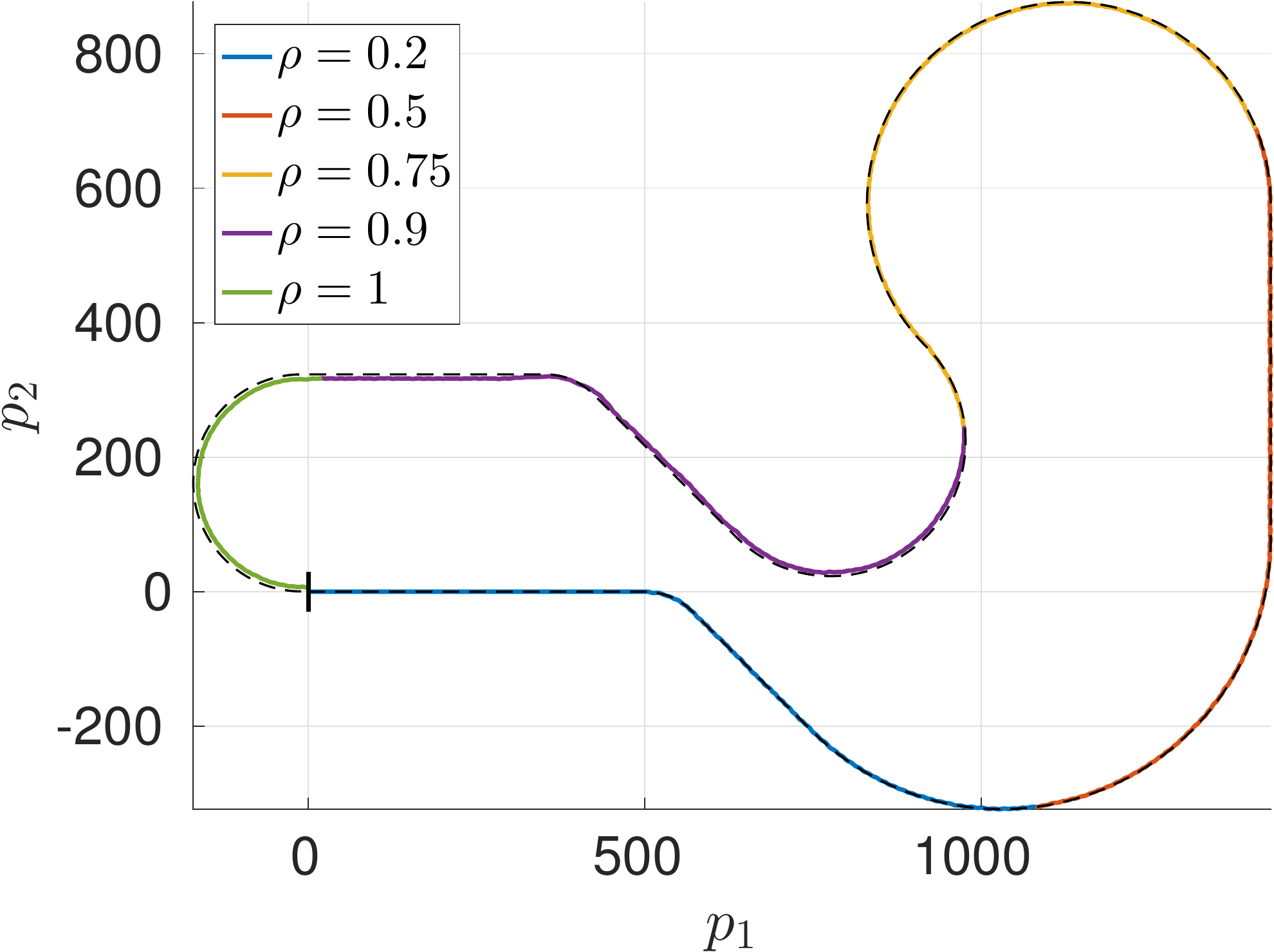} \\ (c)}\hfil
\parbox[b]{0.45\textwidth}{\centering \includegraphics[width=0.4\textwidth]{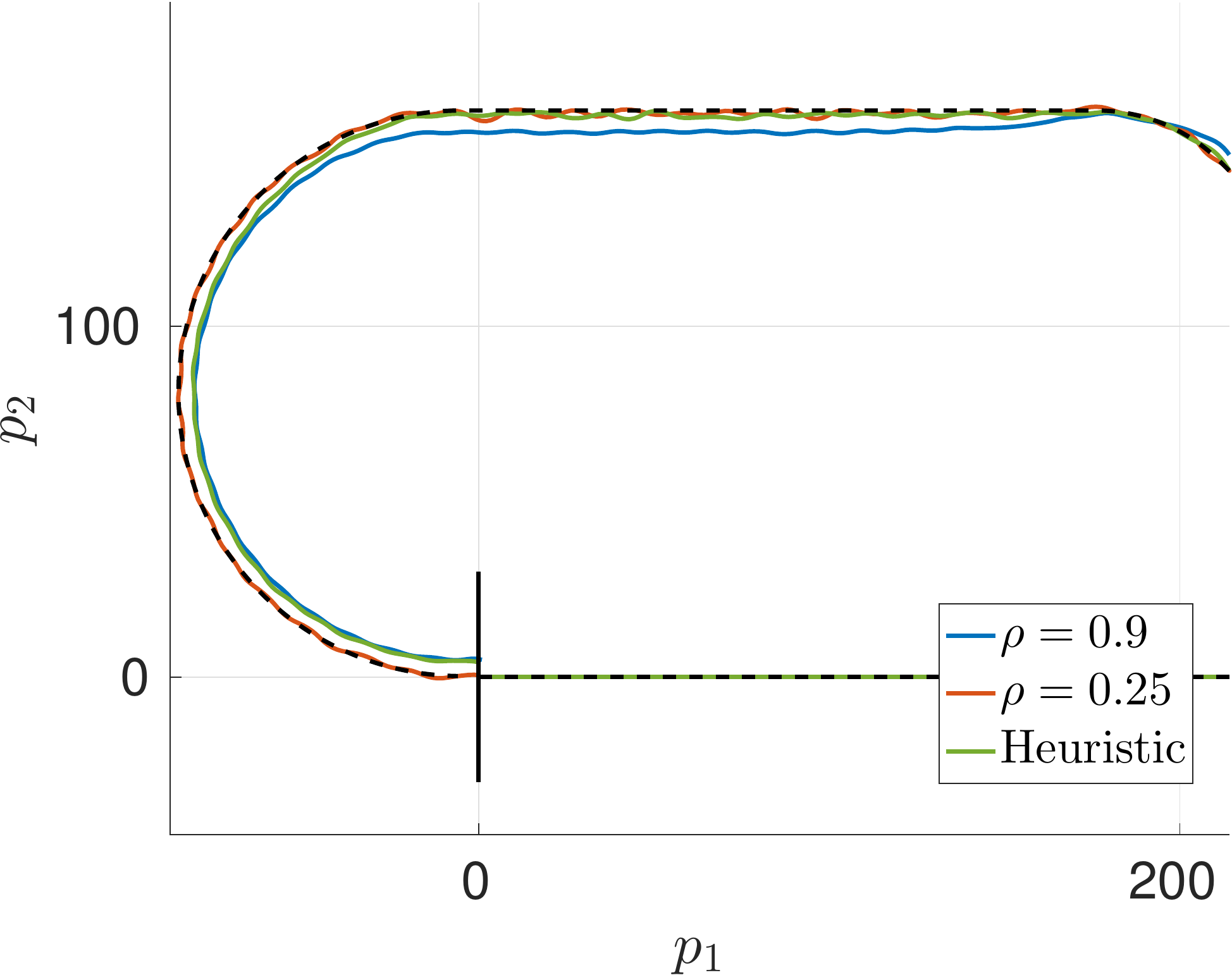} \\ (d)} 
\caption{Test track. (a) Solution of the EMOMPC algorithm for constant weights (blue: $\rho= 1$, orange: $\rho = 0.25$). (b) Zoom of the dashed box in (a). We see that large values of $\rho$ lead to following the inside of a curve. (c) Variation of $\rho$ at constant time instances (from safe to fast driving) indicated by different colors. (d) A scaled track with higher curvature for comparison. We see that low values of $\rho$ lead to a zigzag behavior around the center line. The green curve is obtained by the heuristic \eqref{eq:EMOMPC_Heuristic}.}
\label{fig:Bicycle_Tracks}
\end{figure}

A key advantage of the EMOMPC algorithm is that we are capable of adjusting the weight online in order to react to a changing priorization of the objectives in an ad-hoc manner. This is visualized in Figure~\ref{fig:Bicycle_Tracks}~(c), where $\rho$ is changed at constant time instances in order to change the priorization from safe to fast driving. An alternative to allowing the decision maker to adjust $\rho$ manually is to implement a situation-based heuristic in order to realize the desired behavior. This is shown in Figure~\ref{fig:Bicycle_Tracks}~(d) for the smaller track. We here use a very simple approach where $\rho_i$ is increased or decreased (within prescribed bounds) in the $i^{\mathsf{th}}$ MPC loop according the the current curvature:
\begin{equation} \label{eq:EMOMPC_Heuristic}
	\rho_i = \begin{cases}
	\max \{0.25, \rho_{i-1} - 0.05\} & \mbox{for}~\kappa < \epsilon \\
	\min \{0.90, \rho_{i-1} + 0.05\} & \mbox{for}~\kappa \geq \epsilon
	\end{cases}.
\end{equation}
This means that we want to drive close to the center line on straight parts of the track and drive fast through curves. The performance of both the manually as well as heuristically varied weight is shown in Figure~\ref{fig:Bicycle_ParetoFronts} as well. We cannot guarantee optimality for the entire track in the MPC framework, and we see that both approaches outperform a constant weight solution. This gives a strong motivation for further investigating the choice of $\rho$.

\begin{figure}[t]
	\centering
	\parbox[b]{0.45\textwidth}{\centering \includegraphics[width=0.35\textwidth]{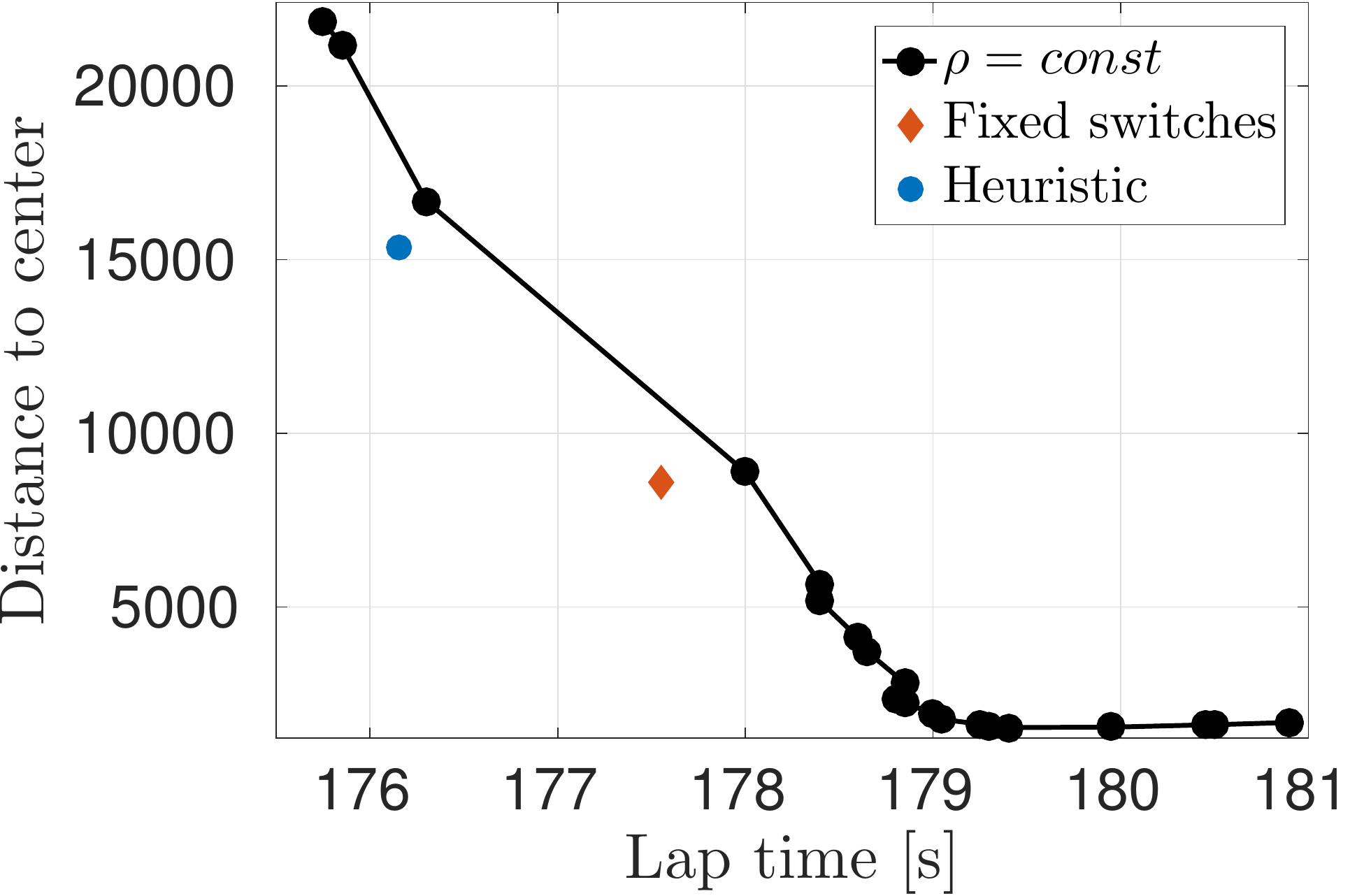} \\ (a)}\hfil
	\parbox[b]{0.45\textwidth}{\centering \includegraphics[width=0.35\textwidth]{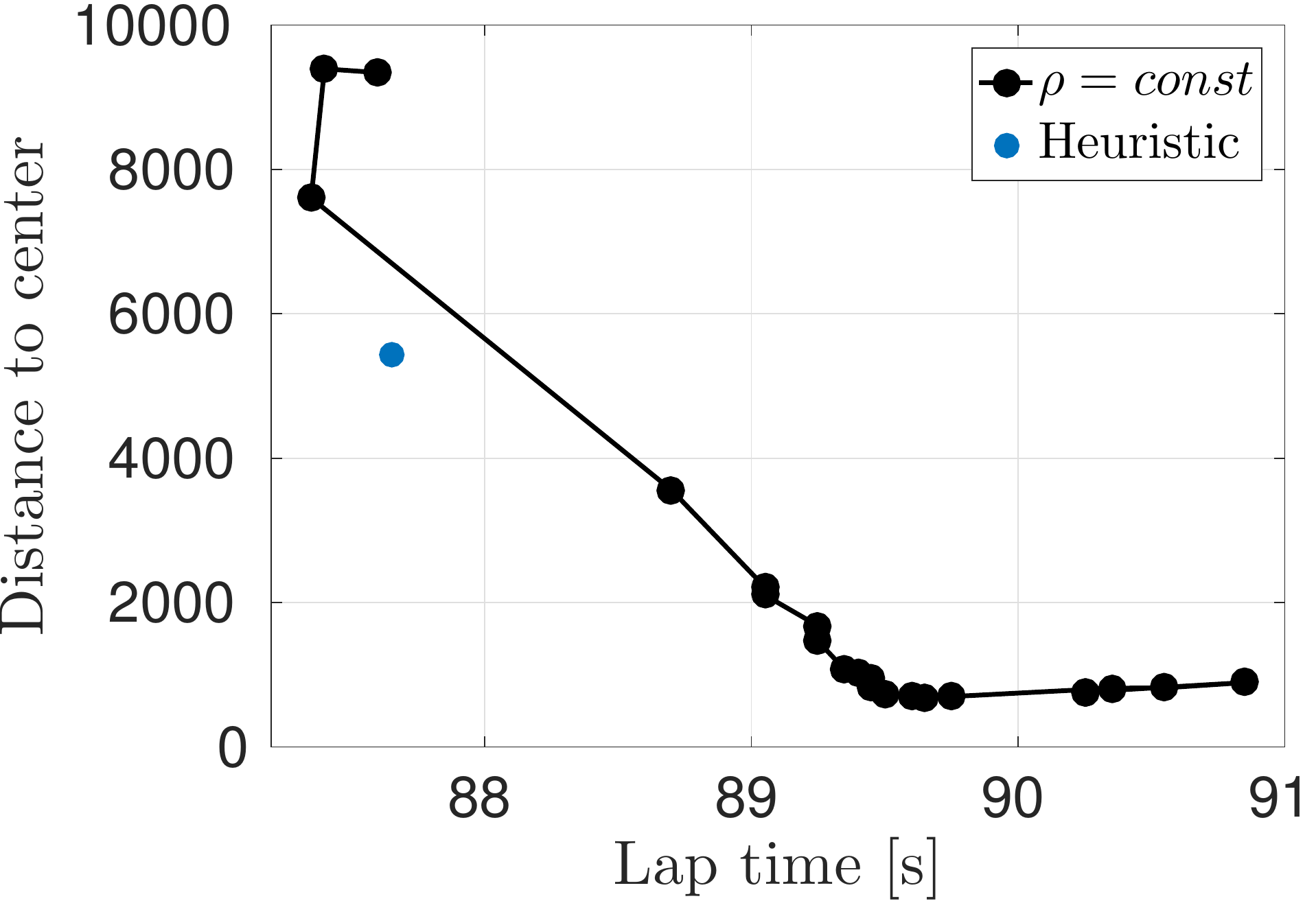} \\ (b)}
	\caption{``Pareto fronts'' (Lap time vs.~integrated distance to center line) for two different track sizes. The black lines are solutions with constant weights $\rho = (0, 0.05, 0.1, \ldots, 1)$. (a) Large track, relatively small curvature values. (b) Smaller track, i.e., higher curvature values. The blue points are obtained by applying the heuristic \eqref{eq:EMOMPC_Heuristic} and the orange point results from fixed switching points, cf.~Figure~\ref{fig:Bicycle_Tracks}~(c).}
	\label{fig:Bicycle_ParetoFronts}
\end{figure}

\subsection{Intelligent cruise control for electric vehicles}
\label{subsec:Examples_EV}
The second example is also related to autonomous driving. Here, we want to control the longitudinal dynamics of an electric vehicle for varying speed limits with respect to the objectives energy efficiency and fast driving. The electric vehicle under consideration has already been studied in various scenarios, see \cite{DEF+14} for single-objective open loop control with height profiles (obtained from GPS data) and \cite{DEF+17} for the extension to two objectives. In \cite{EPS+16}, nonlinear and linear MPC approaches have been compared and in \cite{PSOB+17}, the concept presented here was first applied in a very practical manner. Here, we pick it up once again mainly in order to illustrate the procedure of numerically identifying invariances in situations where it is difficult to show these analytically.

The system dynamics are described by a four-dimensional, nonlinear ODE for the state $x = (v,S,U_{d,L}$, $U_{d,S})$. Here, $v$ is the vehicle velocity, $S$ is the battery state of charge, and $U_{d,L}$ and $U_{d,S}$ are the long term and short term voltage drops, respectively. The system is controlled by the wheel torque $u$, and the battery current $I$ is determined via an algebraic equation. 
The model is described in detail in \cite{EPS+16}, the right-hand side contains both highly nonlinear terms and lookup-tables such that symmetries are very difficult to identify analytically. For the MPC, we have to solve the following MOCP:
\begin{equation}\label{eq:MOCP_EV}
	\begin{aligned}
		\min_{u \in \SetU} J(x_0,u,\gamma)  &= \min_{u \in \SetU} \left(\begin{array}{c}
			S(t_0) - S(t_e) \\ t_e - t_0
		\end{array}\right)  \\
		\dot{x}(t) &= f(x(t), u(t)), \\
		v_{min}(t) &\leq v(t) \leq v_{max}(t), \\
		I_{min} &\leq I(t) \leq I_{max}, \\
		x(0) &= x_0, p(t_e) = p_e.
	\end{aligned}
\end{equation}
For this problem, the parameter $\gamma$ describes the current velocity constraints depending on the part of the track. This is discussed in more detail in the following section.

\subsubsection{Offline phase}
In the offline phase, we again have to construct a library for $x_0$ and $\gamma(p) = (v_{min}(p), v_{max}(p))$, where $p = \int_{t_0}^{t_e} v(t) \ dt$ is the current position of the vehicle. Similar to the first example, $\gamma$ is a function and thus infinite-dimensional. We therefore use a linear approximation for the velocity bounds. Moreover, we distinguish between four different scenarios: (i) constant velocity, (ii) acceleration, (iii) deceleration and (iv) stopping, see Figure~\ref{fig:EV_OfflineScenarios}. In the constant velocity scenario, we simply have $\gamma = (0.8 \overline{v}, \overline{v})$, where $\overline{v}$ is the maximal allowed velocity on this part of the track. In the stopping scenario, we have a terminal constraint at position $p_f$ which is $v(p_f) = 0$. This yields $\gamma(p_f) = (0, 0)$. Finally, for the acceleration and deceleration, we introduce a lower or upper bound for the velocity derivative $dv/dp$, respectively, which is determined in each time step according to Figure~\ref{fig:EV_OfflineScenarios}~(b):
\[
	\frac{dv}{dp} = a(p) \lessgtr \frac{v(p_f) - v(p)}{p_f - p}.
\]
This leads to $\gamma(p) = (v(p(t_0)) + a p, \infty)$ for the acceleration and $\gamma = (-\infty, v(p(t_0)) + a p)$ for the deceleration case.

\begin{figure}[h!]
	\centering
	\parbox[b]{0.3\textwidth}{\centering \includegraphics[width=0.28\textwidth]{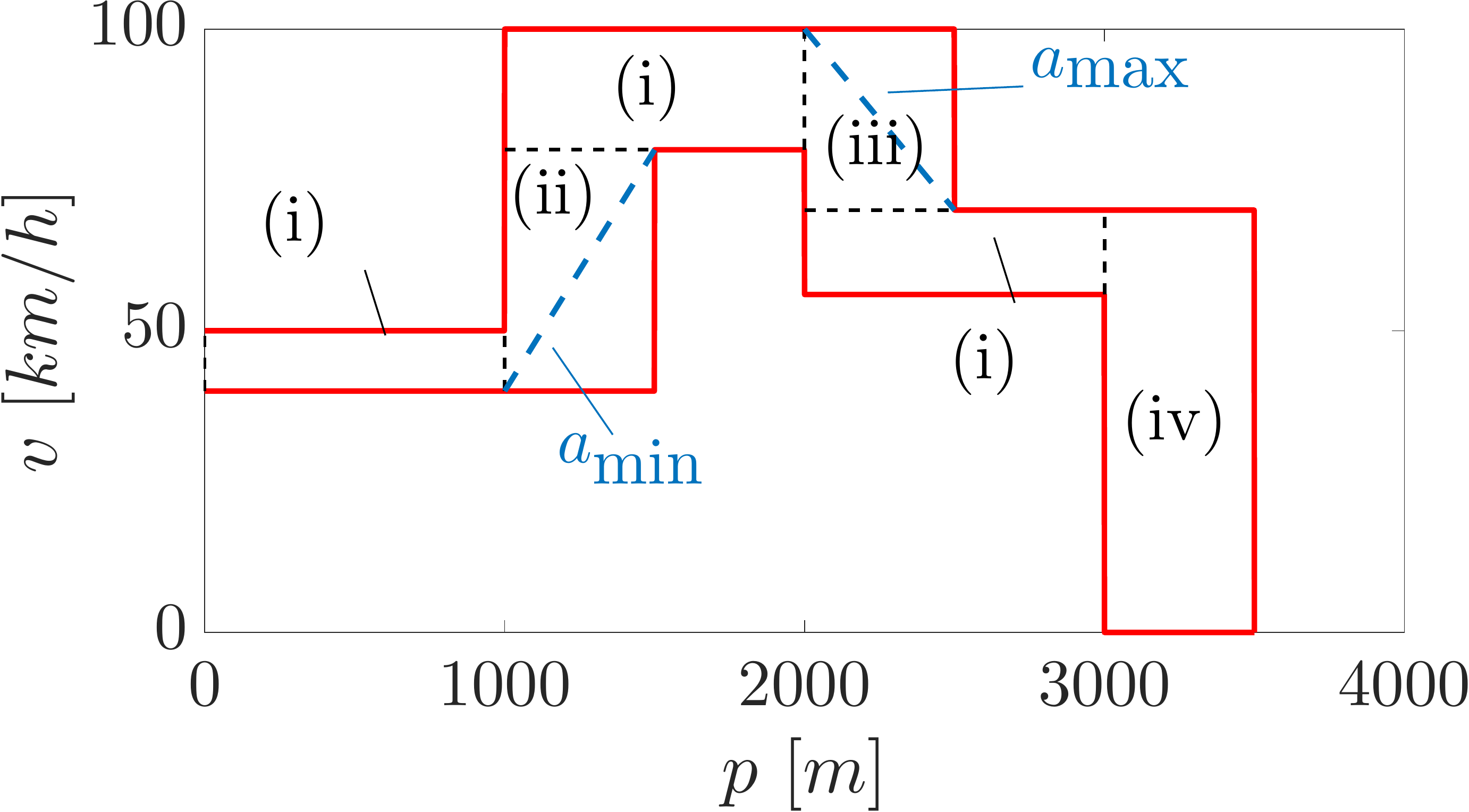} \\ (a)}\hfil
	\parbox[b]{0.3\textwidth}{\centering \includegraphics[width=0.28\textwidth]{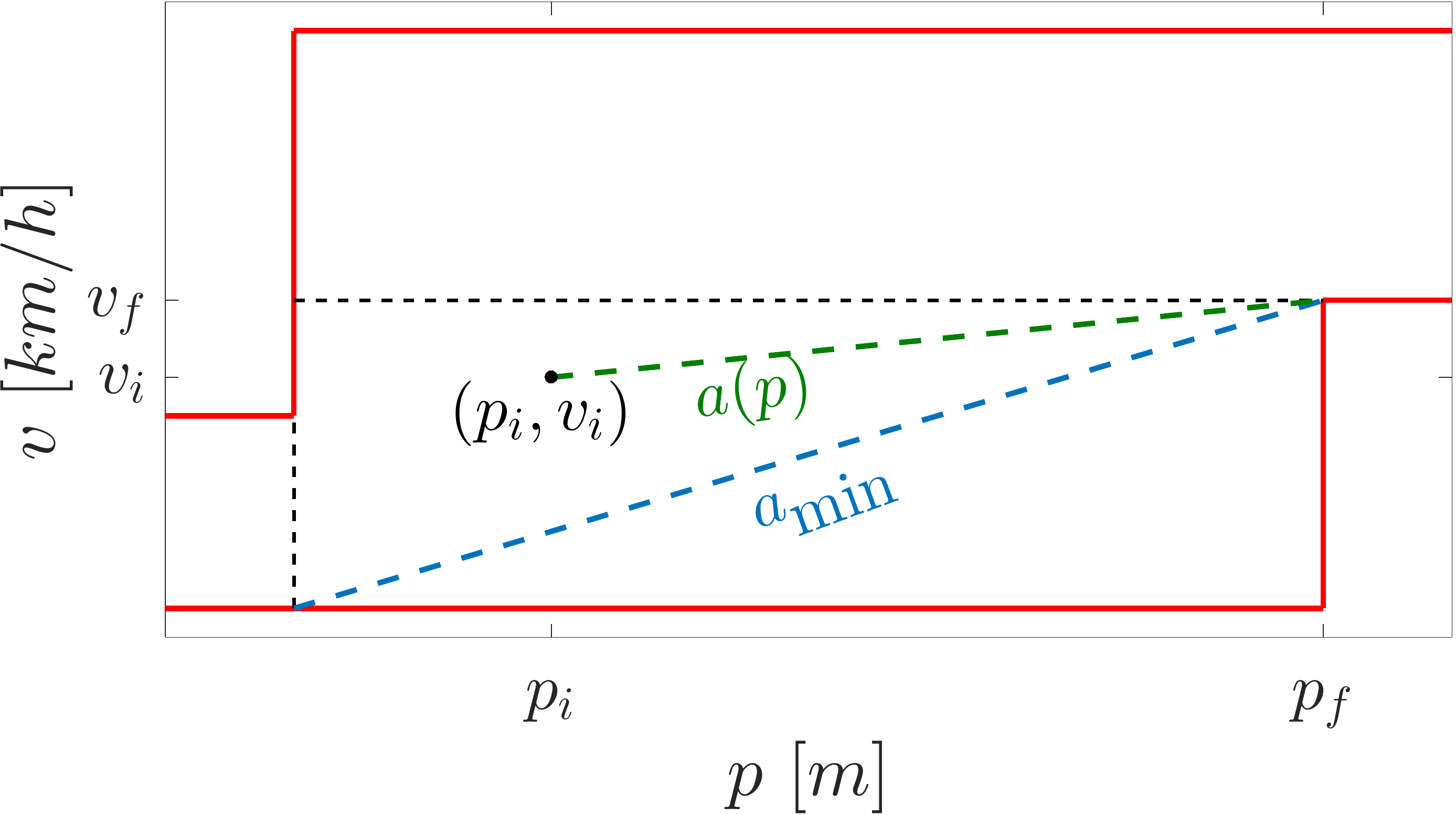} \\ (b)}
	\caption{\cite{PSOB+17} Constraints which are encoded in $\widetilde{\gamma}$ according to the respective scenario. (a) Four scenarios: (i) constant velocity ($\widetilde{\gamma} = \overline{v}$), (ii) acceleration ($\widetilde{\gamma} = a$), (iii) deceleration ($\widetilde{\gamma} = a$), (iv) stopping ($\widetilde{\gamma} = p_f$). (b) Bound $a(p)$ for the acceleration scenario. }
	\label{fig:EV_OfflineScenarios}
\end{figure}

In order to reduce the dimension of $x_0$, we numerically investigate the dependence of the solution on the different states. This is shown in Figure~\ref{fig:EV_OfflineInvariances}, where we see in (a) and (b) that the state of charge and the vehicle velocity are almost invariant with respect to the initial value of $S$ (for $S(t_0) > 5 \%$). On the other hand, we see in (c) that the system is not invariant with respect to $v(t_0)$. Proceeding this way with all variables, we see that problem \eqref{eq:MOCP_EV} is (almost) invariant with respect to translations in $S(t_0)$ as well as $U_{d,L}(t_0)$ and $U_{d,S}(t_0)$. The objective function is invariant as a consequence of the invariance of $S$ and $v$ and numerical tests show that $I$ is also almost invariant with respect to translations in these quantities, i.e., the constraints are also invariant. This results in $\widetilde{x}_0 = v(t_0)$. Consequently, we only have to construct a library for $(\widetilde{x}_0, \widetilde{\gamma}) = (v(t_0), \widetilde{\gamma})$, where $\widetilde{\gamma} \in \{\overline{v}, a,p_f\}$ depending on the current scenario. After the appropriate discretization, this results in 1727 MOCPs for the offline phase.
\begin{figure}[h!]
	\centering
	\parbox[b]{0.3\textwidth}{\centering \includegraphics[width=0.3\textwidth]{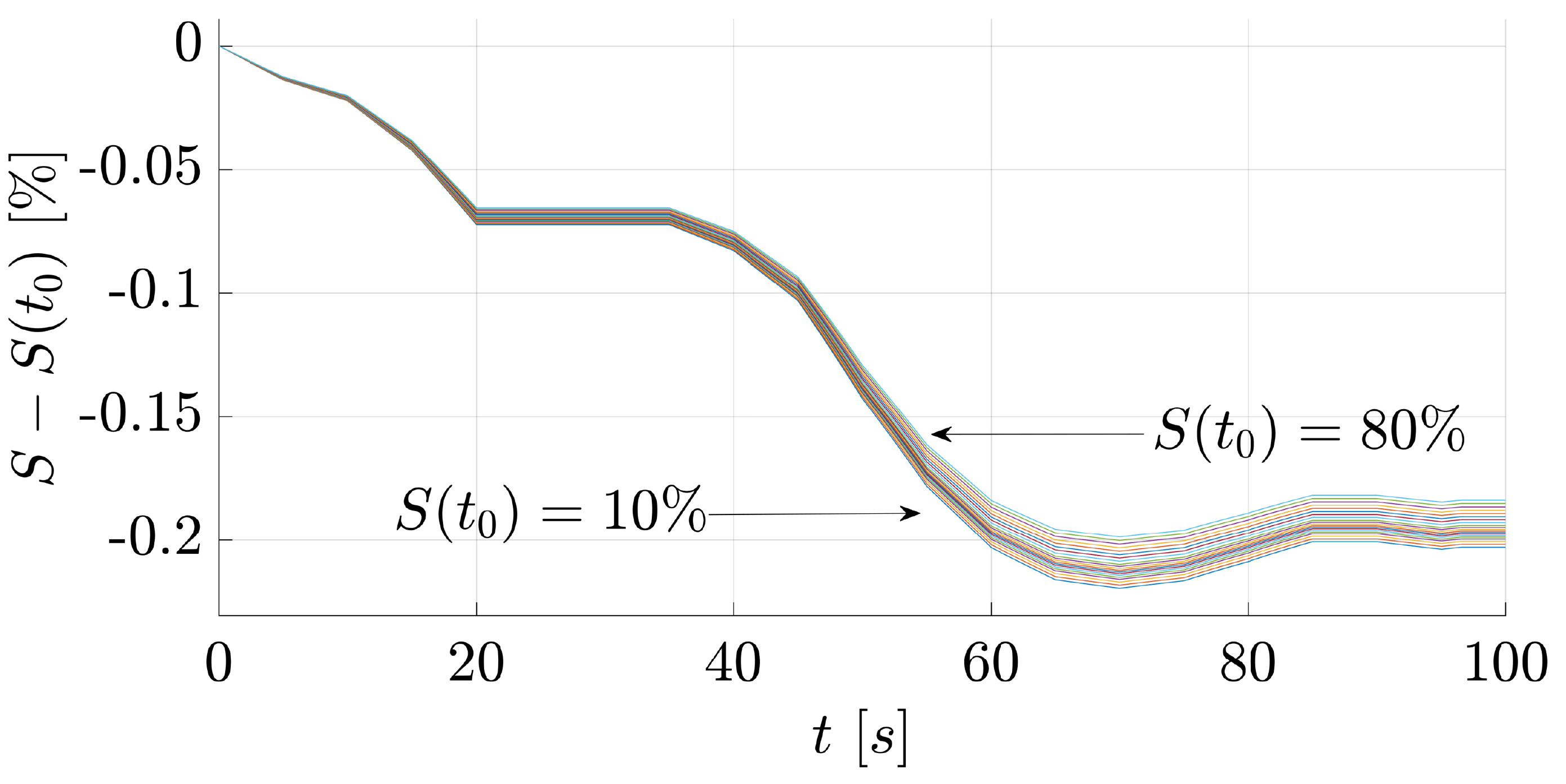} \\ (a)}\hfil
	\parbox[b]{0.3\textwidth}{\centering \includegraphics[width=0.3\textwidth]{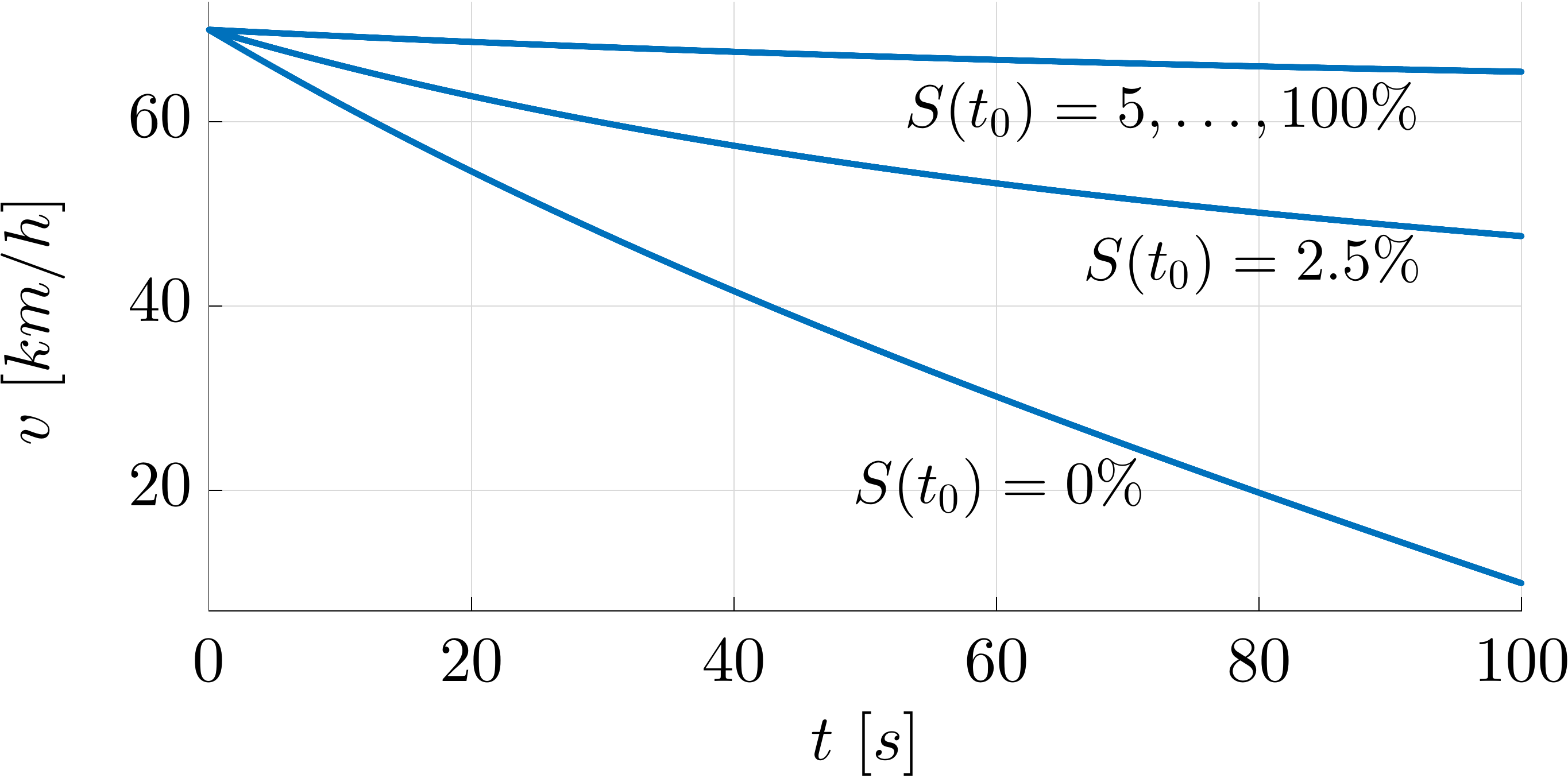} \\ (b)}\hfil
	\parbox[b]{0.3\textwidth}{\centering \includegraphics[width=0.3\textwidth]{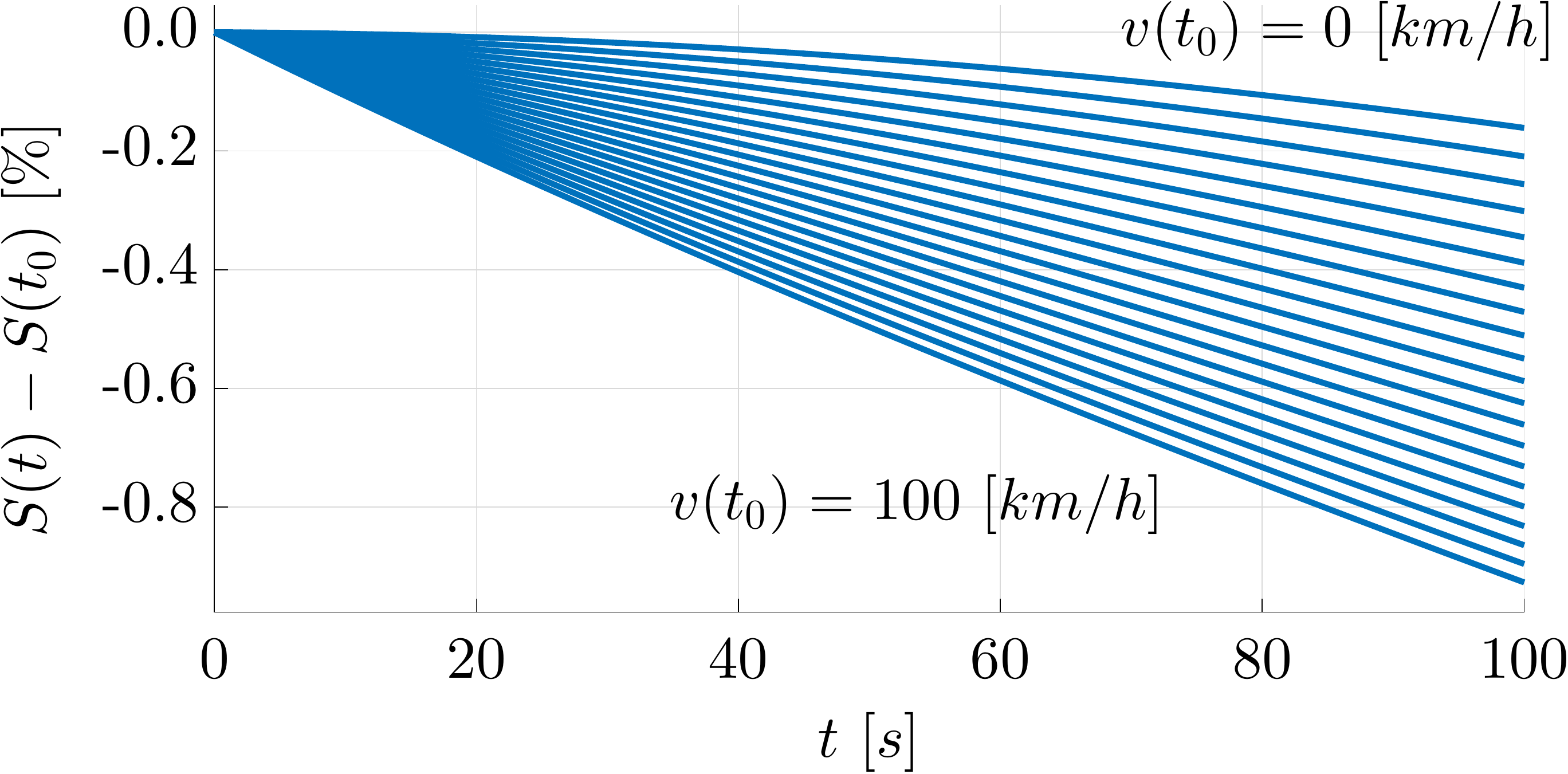} \\ (c)}
	\caption{\cite{PSOB+17} (a) Almost invariance of the state of charge with respect to the initial state of charge $S(t_0)$. (b) Almost invariance of the velocity with respect to the initial state of charge $S(t_0)$. (c) No invariance with respect to the initial velocity.}
	\label{fig:EV_OfflineInvariances}
\end{figure}

\subsubsection{Online phase}
The online phase follows precisely Algorithm~\ref{alg:online}, i.e., we identify $(\widetilde{x}_0, \widetilde{\gamma})$ and select (or interpolate) the corresponding Pareto set from the library. According to the passenger's preference $\rho$, we then apply the control to the plant and repeat the process at the next time instance. As mentioned in Section~\ref{sec:Symmetries}, it is not necessary to store the state trajectories but only the Pareto optimal controls. The results are shown in Figure~\ref{fig:EV_Online}. In (a), several trajectories for constant weights (dashed black lines) and one for a varying weight $\rho$ (green, changes at $p=2000$ and $p=4000$) are shown. We observe that the constraints are not violated (by construction of the library) and that moreover, the algorithm enables us to vary the behavior of the nonlinear vehicle dynamics in a very flexible manner by varying $\rho$. In Figure~\ref{fig:EV_Online}~(b), we see the corresponding objective function values for the entire track, where the classical trade-off behavior can be observed. In \cite{PSOB+17}, there is furthermore a discussion on automatically choosing $\rho$ in a similar fashion to the heuristic \eqref{eq:EMOMPC_Heuristic} implemented for the first example. This way, the additional flexibility of changing $\rho$ online is lost but on the other hand, a near-optimal performance can be achieved which is verified by comparing the results to a very expensive dynamic programming (DP) solution for a simplified track. This is visualized in Figure~\ref{fig:EV_Online}~(c) and (d), where the heuristic is -- simply speaking -- to select large values for $\rho$ at low velocities, low values at higher velocities, and additionally perform slight adjustments when approaching switches between different scenarios. We see that when appropriately choosing $\rho$, we can achieve almost globally optimal behavior in real-time.

\begin{figure}[h!]
	\centering
	\parbox[b]{0.45\textwidth}{\centering \includegraphics[width=0.38\textwidth]{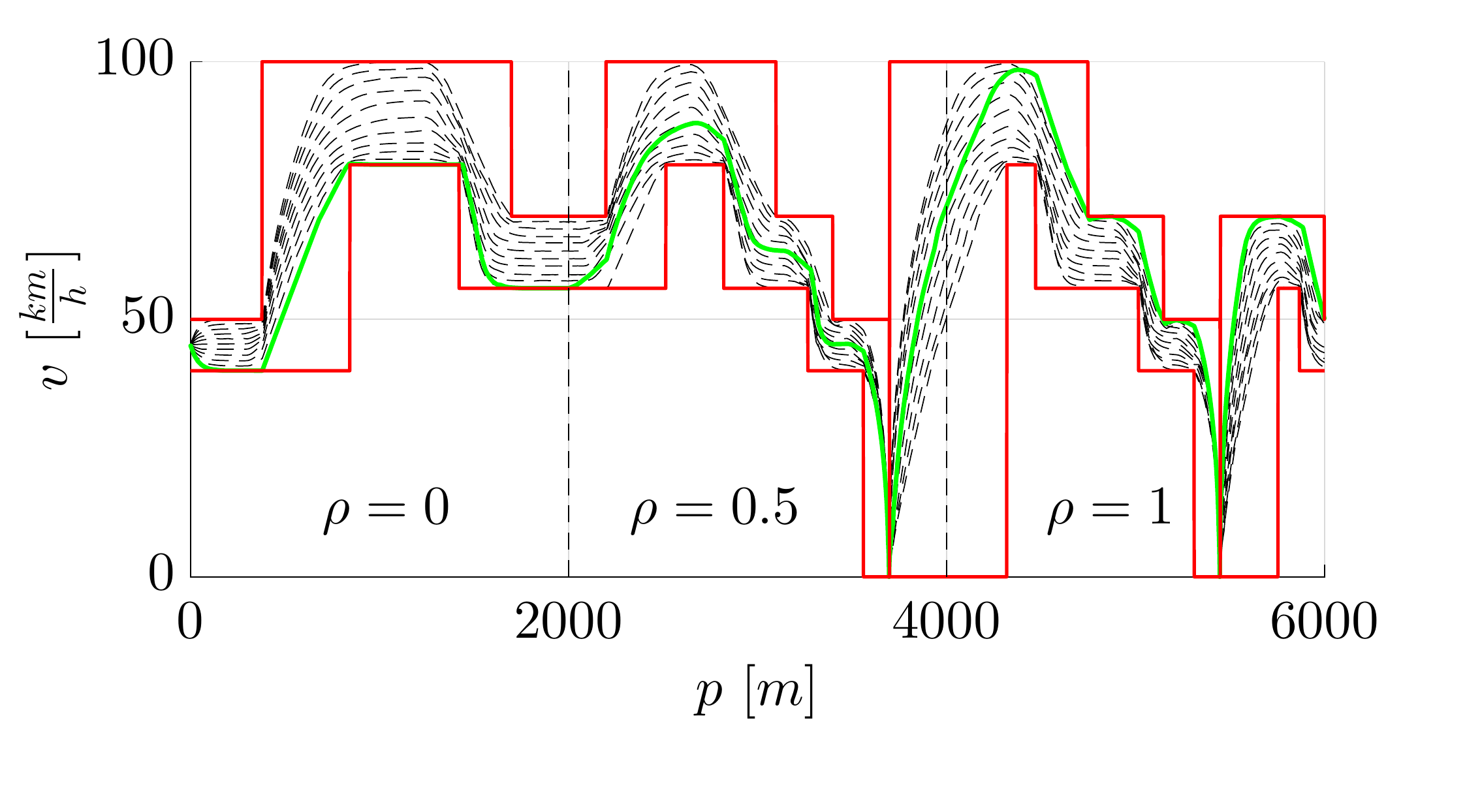} \\ (a)}\hfil
	\parbox[b]{0.4\textwidth}{\centering \includegraphics[width=0.32\textwidth]{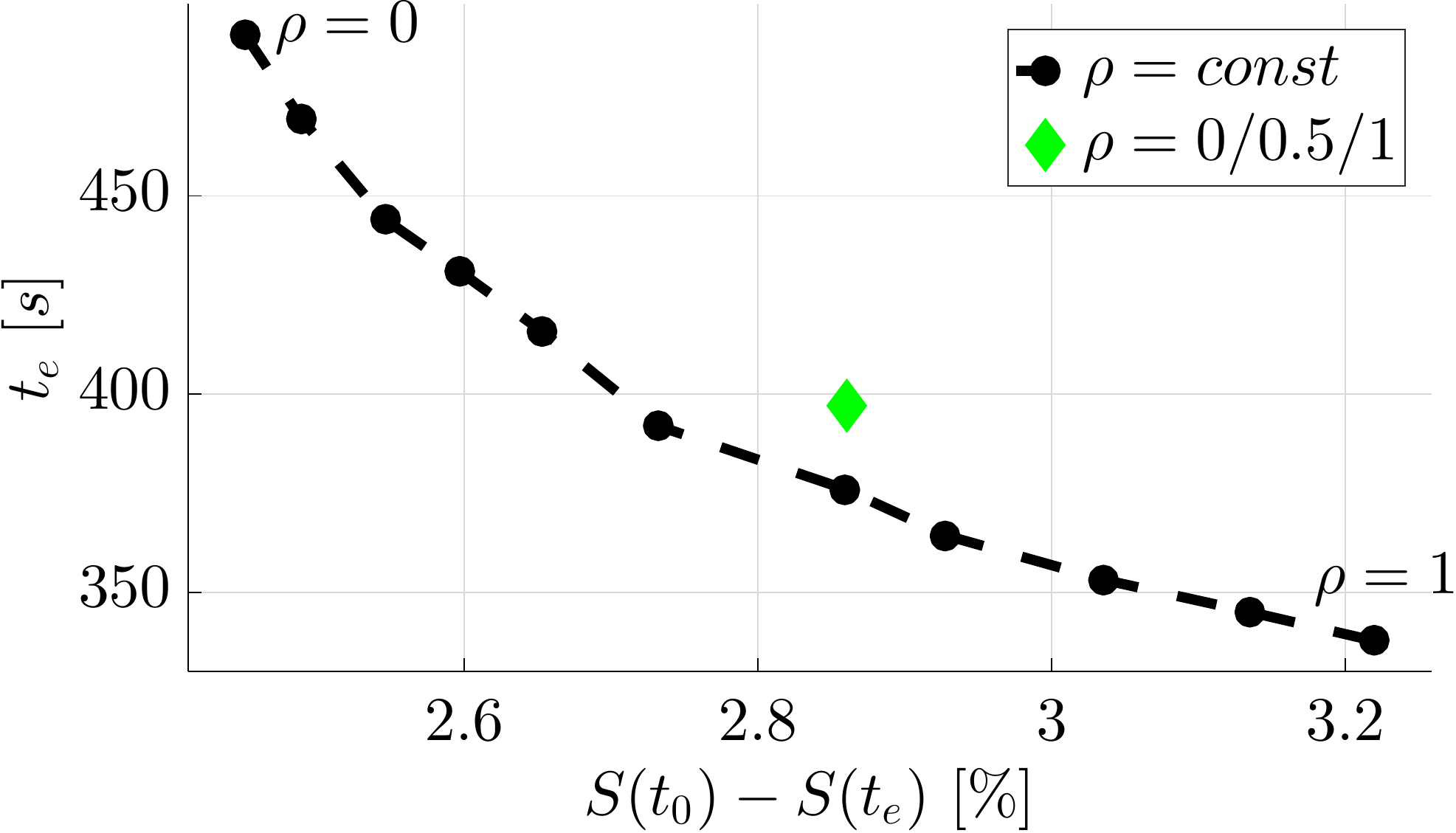} \\ (b)} \\
	\parbox[b]{0.45\textwidth}{\centering \includegraphics[width=0.38\textwidth]{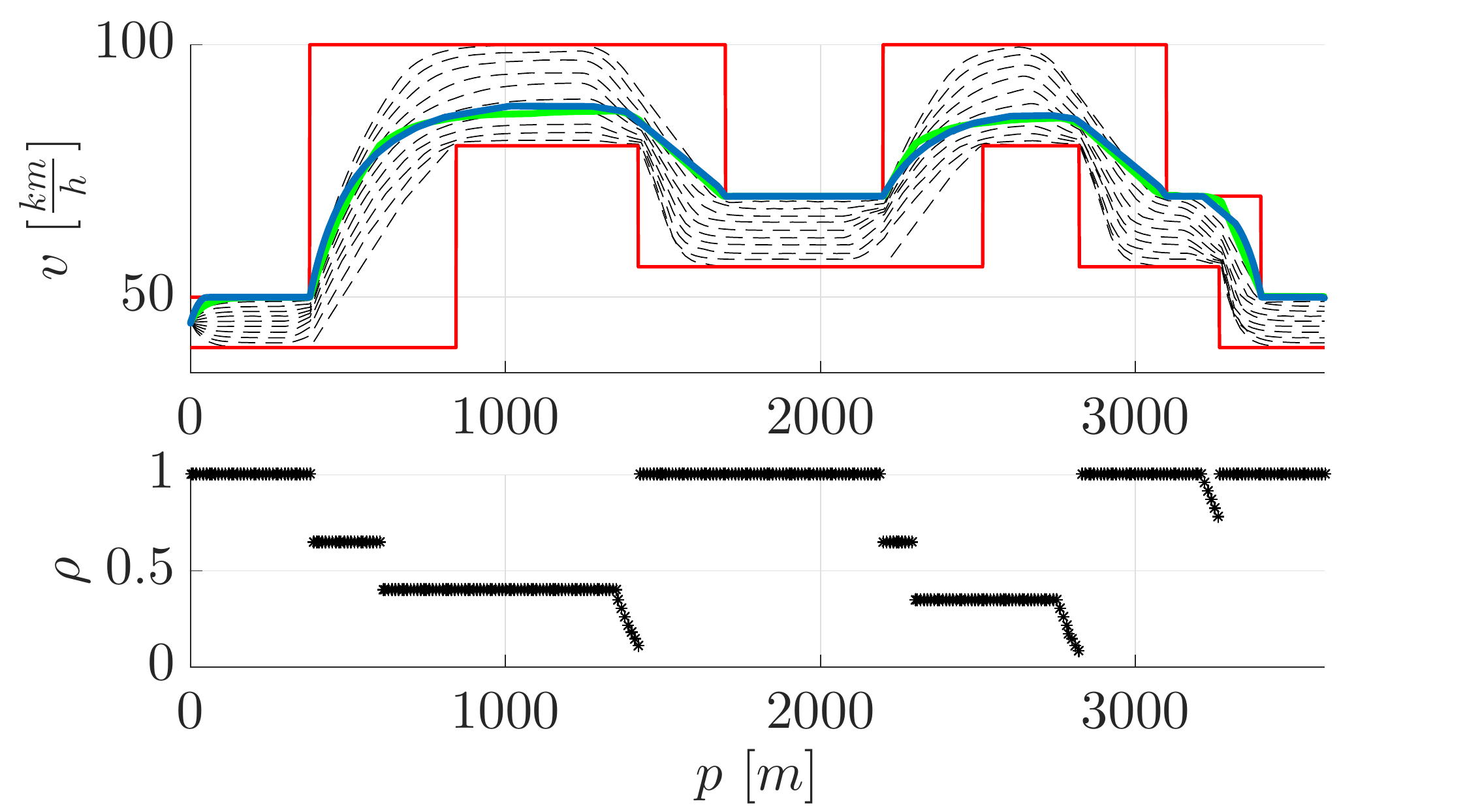} \\ (c)}\hfil
	\parbox[b]{0.4\textwidth}{\centering \includegraphics[width=0.32\textwidth]{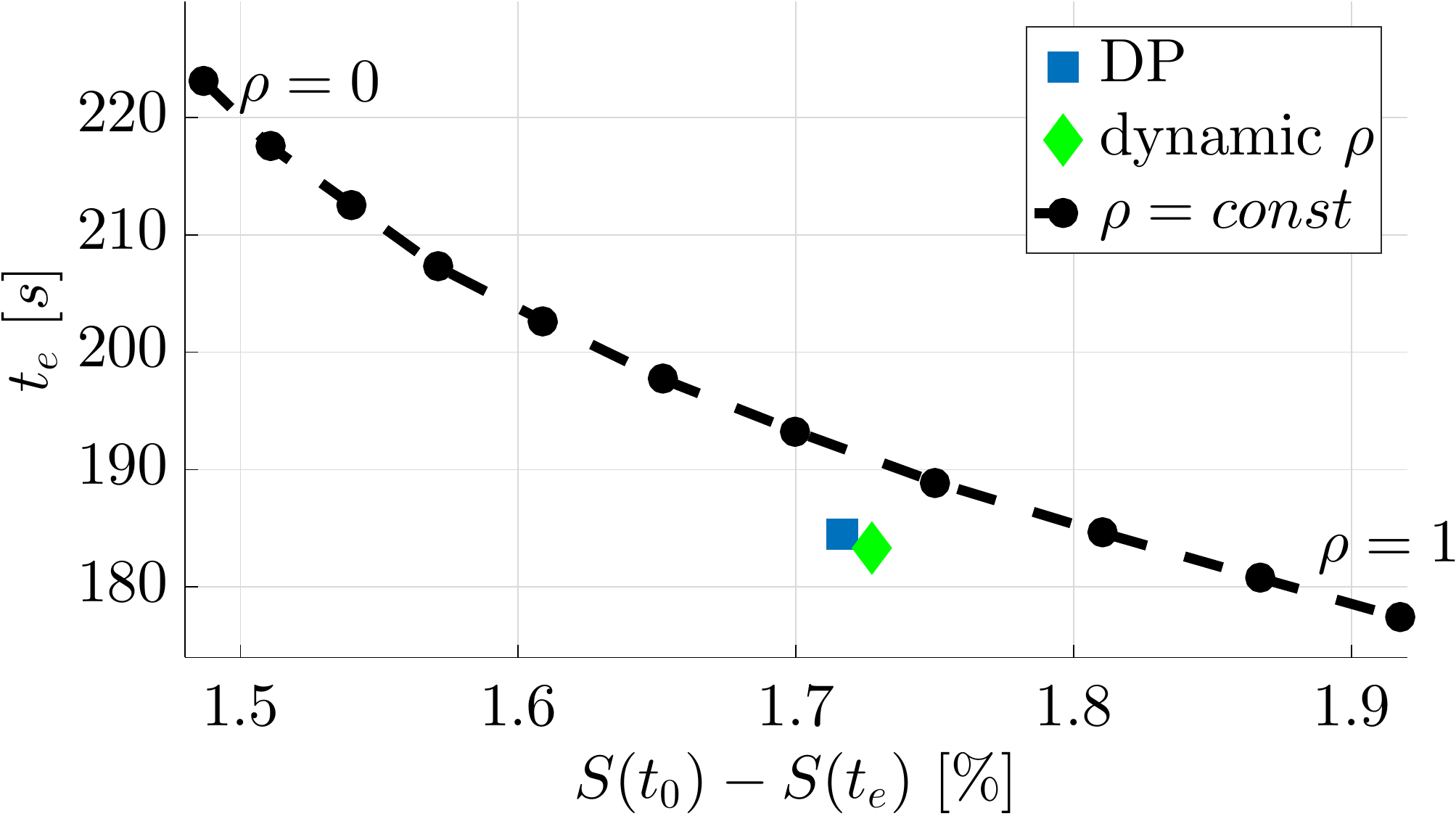} \\ (d)}
	\caption{\cite{PSOB+17} (a) Application of the EMOMPC algorithm to an example track. Dashed lines: constant weights $\rho$. Green line: varying weight $\rho$. (b) The Pareto front corresponding to (a). (c) Simpler track and heuristically chosen $\rho$ (see bottom plot) and comparison to global optimum obtained via dynamic programming. (d) The ``Pareto front'' corresponding to (c).}
	\label{fig:EV_Online}
\end{figure}


\section{Conclusion}
\label{sec:Conclusion}

We have presented an explicit MPC algorithm for nonlinear dynamical systems with multiple objectives which extends the results from \cite{BMDP02} in several regards. In order to reduce the computational effort, we make extensive use of symmetries in the dynamical control system and the multiobjective optimal control problem. In contrast to the classical approach from motion planning with motion primitives, we only require the $\arg \min$ of two problems to be identical, i.e.~Pareto sets are valid in multiple situations and we do not have to store Pareto optimal trajectories.

Two applications from autonomous driving demonstrate the efficiency and the additional control freedom this approach yields. In contrast to arbitrarily weighting different objectives, knowledge of the entire Pareto set gives the decision maker a much deeper insight and allows for a better selection of the compromise solution. Moreover, convexity cannot be guaranteed for nonlinear problems such that one is not necessarily able to compute all optimal compromises by weighting, as we have seen in the first example. By allowing variations of the objective priorization online, adaptivity to changing situations is increased in contrast to scalar-valued problem formulations. Alternatively, heuristics for choosing the weight depending on the current situation allow for the computation of nearly globally optimal solutions despite the real-time situation.

For future work, it will be interesting to develop error estimates for the interpolation procedure similar to what was presented in \cite{Joh02,BF06} for scalar-valued problems, and to interactively add elements to the library such that the maximum error is reduced. This way, the computational effort may be reduced while increasing the quality of the resulting trajectory. Furthermore, the results show that efficient heuristics for automatically choosing the weight parameter $\rho$ are worth investigating.

\paragraph{Acknowledgement}
This work has been partially funded by the EPSRC project: ``Fractional Variational Integration and Optimal Control''; ref: EP/P020402/1. The calculations were performed on resources provided by the Paderborn Center for Parallel Computing (PC$^2$). 
We thank Kathrin Fla{\ss}kamp for fruitful discussions on symmetries in control systems.

\bibliographystyle{unsrt}
{\footnotesize
\bibliography{EMOMPC}
}
\end{document}